\newtheorem{thm}{Theorem}[section]
\newtheorem{defi}{Definition}[section]
\newtheorem{lem}{Lemma}[section]
\newtheorem{pro}{Proposition}[section]
\newtheorem{cor}{Corollary}[section]
\newcommand{\bc}{{\mathbb C}}
\newcommand{\br}{{\mathbb R}}
\newcommand{\bh}{{\mathbb H}}
\newcommand{\fq}{{\mathcal Q}}
\newcommand{\bF}{{\mathbb F}}
\newcommand{\bP}{{\mathbb P}}
\newcommand{\bJ}{{\mathbb J}}
\newcommand{\mtx}[4]
\begin{document}

\title{On volumes of  quaternionic  hyperbolic n-orbifolds}
\author{ Wensheng Cao \quad  Jianli Fu\\
School of Mathematics and Computational Science,\\
Wuyi University, Jiangmen, Guangdong 529020, P.R. China\\
e-mail: {\tt wenscao@aliyun.com}\\
}
\date{}
\maketitle
\bigskip
{\bf Abstract:} By use of  H. C. Wang's bound on the radius of a ball embedded in the fundamental domain of a lattice of a semisimple Lie group, we construct an explicit lower bound for the volume of a quaternionic hyperbolic orbifold that depends only on dimension.
\smallskip

\medskip

{\bf 2000 Mathematics subject classification:} Primary 22E40;
Secondary 53C42, 57S30.
\medskip

{\bf Keywords:} Quaternionic hyperbolic orbifolds, Volume, Riemannian submersion, Lie Group.

\section{Introduction}
Let $\bF$ be  real numbers  $\br$,  complex numbers $\bc$ or quaternions $\bh$,  and ${\bf H}_\bF^n$  the $n$-dimensional hyperbolic space over $\bF$.  Let $G$ be  the linear groups which act as the isometries in ${\bf H}_\bF^n$. For $\bF=\br$, $\bc$ and $\bh$,  $G$  are usually  denoted by ${\rm O}(n,1)$, ${\rm U}(n,1)$ and ${\rm Sp}(n,1)$, respectively.   A hyperbolic orbifold $\fq$  is a quotient  of  ${\bf H}_\bF^n$  by  a discrete subgroup $\Gamma$ of  $G$.  An orbifold ${\bf H}_\bF^n/\Gamma$ is a manifold
when $\Gamma$ contains no elements of finite order.

In 1945, Siegel\cite{sie43,sie45} posed the problem of identifying the minimal
covolume lattices of isometries of real hyperbolic $n$-space, or more generally rank-1 symmetric spaces.
He solved the problem in two dimensions and Gehring and Martin \cite{gema} solved similar problem in three dimensions.  For the general cases, the results  of Wang and Gunther  \cite{gal90,wan69, wan72} imply that  the hyperbolic volumes of $\fq$  form a discrete subset of $\br$.

Firedland and Hersonsky  \cite{frhe93} constructed a lower
bound for $r_n$, the largest number such that every hyperbolic $n$-manifold contains a round ball of
that radius.  From this one can compute an explicit lower bound for the volume of a hyperbolic $n$-manifold. Recently,  some  analogous results  have been obtained in complex and quaternionic settings \cite{cao16,xie14}

Adeboye  obtained an explicit lower bound for the volume of a real hyperbolic orbifold  depending on the dimension \cite{abe08}. The main tool is the spectral radius of the involved matrices.   Such a technique  was employed latter in complex and quaternionic  settings \cite{fu11,haxi16}.

Recently Adeboye and Wei reconsidered  the question of lower bound for the volume of a real hyperbolic orbifold  with the tools  of Lie group and  Riemannian  submersion \cite{abe12}. They obtained the following theorem.

  \begin{thm} {\rm \bf (Theorem 0.1 of  Adeboye and Wei \cite{abe12})}  \label{rabe12}
The volume of a real hyperbolic $n$ -orbifold is bounded below by $\mathcal{R}(n)$,  an  explicit constant depending only on dimension, given by
$$ \mathcal{R}(n)=\frac{2^{\frac{6-n}{4}}\pi^{\frac{n}{4}}(n-2)!(n-4)!\cdots 1}{(2+9n)^{\frac{n^2+n}{4}}
\Gamma(\frac{n^2+n}{4})} \int^{\min[0.0806\sqrt{2+9n},\pi]}_0(\sin\rho)^{\frac{n^2+n-2}{2}} d\rho.$$
\end{thm}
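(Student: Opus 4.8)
The plan is to realize the orbifold as a double coset space and reduce the volume estimate to a ball-volume computation on the ambient Lie group. Write $G={\rm O}(n,1)$ (or its identity component) with maximal compact subgroup $K$, so that ${\bf H}_\br^n=G/K$ and the orbifold is $\fq=\Gamma\backslash G/K$. The group $G$ is semisimple of dimension $N:=\dim G=\binom{n+1}{2}=\frac{n^2+n}{2}$, so that $N-1=\frac{n^2+n-2}{2}$ and $N/2=\frac{n^2+n}{4}$, which already account for the exponents appearing in $\mathcal R(n)$. Equipping $G$ with a left-invariant metric for which the natural projection $\pi:G\to G/K={\bf H}_\br^n$ is a Riemannian submersion onto hyperbolic space of curvature $-1$, the co-area (Fubini) formula for submersions gives
$$\text{Vol}(\fq)=\frac{\text{Vol}(\Gamma\backslash G)}{\text{Vol}(K)}.$$
Thus it suffices to bound $\text{Vol}(\Gamma\backslash G)$ from below and to compute $\text{Vol}(K)$ explicitly; the latter is the classical volume of the orthogonal group and supplies the factor $2^{\frac{6-n}{4}}\pi^{\frac n4}(n-2)!(n-4)!\cdots 1$.

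The key quantitative input is an upper bound $\Lambda$ for the sectional curvature of this left-invariant metric. First I would fix the Cartan decomposition $\mathfrak g=\mathfrak k\oplus\mathfrak p$, choose an orthonormal basis adapted to it, and apply the standard formulas for the sectional curvature of a left-invariant metric (e.g. Milnor's formulas in terms of the structure constants of $\mathfrak o(n,1)$). Bounding each curvature term by controlling the structure constants should yield the explicit bound $\Lambda=2+9n$. I expect this curvature estimate to be the main obstacle: it requires careful bookkeeping of the Lie-bracket relations on $\mathfrak o(n,1)$ and of the mixed $\mathfrak k$–$\mathfrak p$ terms created by the submersion, and it is precisely here that the constant $2+9n$ (hence the factor $(2+9n)^{\frac{n^2+n}{4}}$ and the scale $\sqrt{2+9n}$ in the integration limit) is produced.

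With the curvature bound in hand, I would invoke H. C. Wang's theorem: since $G$ is semisimple and $\Gamma$ is a lattice, a fundamental domain for $\Gamma$ in $G$ contains an embedded metric ball $B(r)$ whose radius is governed by Wang's universal constant, $r=0.0806$ (matching the normalization against $\Lambda$ being part of the bookkeeping). Hence $\text{Vol}(\Gamma\backslash G)\ge \text{Vol}(B(r))$. To estimate $\text{Vol}(B(r))$ from below I would apply Günther's volume comparison theorem: because the sectional curvature is bounded above by $\Lambda=2+9n$, the volume of $B(r)$ is at least the volume of a ball of radius $r$ in the simply connected space form of constant curvature $\Lambda$, valid provided $r\le \pi/\sqrt\Lambda$. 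Writing the space-form ball volume in geodesic polar coordinates and substituting $t=\rho\sqrt\Lambda$ gives
$$\text{Vol}(B(r))\ \ge\ \frac{2\pi^{N/2}}{\Gamma(N/2)}\,\Lambda^{-N/2}\int_0^{\min[r\sqrt\Lambda,\ \pi]}(\sin t)^{N-1}\,dt,$$
where the cap $\pi$ enforces the hypothesis of the comparison and explains the limit $\min[0.0806\sqrt{2+9n},\pi]$ in the statement.

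Finally I would assemble the pieces. Dividing the displayed lower bound for $\text{Vol}(\Gamma\backslash G)$ by $\text{Vol}(K)$, substituting $N=\frac{n^2+n}{2}$ so that $\Gamma(N/2)=\Gamma(\frac{n^2+n}{4})$, $\Lambda^{-N/2}=(2+9n)^{-\frac{n^2+n}{4}}$, and the integrand becomes $(\sin\rho)^{\frac{n^2+n-2}{2}}$, and then inserting the explicit value of $\text{Vol}(K)$, collapses all constants into the prefactor of $\mathcal R(n)$. The only genuinely delicate step is the curvature bound of the second paragraph; the remaining arguments are an assembly of Wang's embedding, Günther's comparison, and a bookkeeping of normalizing constants.
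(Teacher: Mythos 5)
Your proposal is correct and follows essentially the same route as the source of this statement (and as this paper's own proof of its quaternionic analogue, Theorem \ref{mainthm}): a Riemannian submersion ${\rm SO_o}(n,1)\to{\bf H}_\br^n$, Wang's embedded ball of radius $R_G/2$ in the fundamental domain of $\Gamma$ in the Lie group, G\"unther's comparison against the space form of curvature equal to the sectional-curvature upper bound $\Lambda=2+9n$, and division by ${\rm Vol}(K)$ via the totally geodesic fibers. The only substantive piece you defer rather than carry out is the derivation of $\Lambda=2+9n$ from the structure constants of $\mathfrak{so}(n,1)$, which is indeed where all the work lies (and which Section \ref{realcomplex} of this paper sharpens to $\frac{3+4\sqrt{2}}{2}$ for $n\geq 4$).
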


    Such work  significantly improved  upon the volume bounds of \cite{abe08,mar89}.
   The authors also obtained the following result in complex setting.
 \begin{thm}{\rm \bf (Theorem 0.1 of  Adeboye and Wei \cite{abe14})}\label{rabe14}
The volume of a complex  hyperbolic $ n$ -orbifold is bounded below by $\mathcal{C}(n)$,  an  explicit constant depending only on dimension, given by
$$ \mathcal{C}(n)=\frac{2^{n^2+n+1}\pi^{\frac{n}{2}}(n-1)!(n-2)!\cdots!3!2!1!}{(36n+21)^{\frac{n^2+2n}{2}}
\Gamma(\frac{n^2+2n}{2})} \int^{\min[0.06925\sqrt{36n+21},\pi]}_0(\sin\rho)^{n^2+2n-1} d\rho.$$
\end{thm}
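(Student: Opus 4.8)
The plan is to mirror the real-hyperbolic argument of Adeboye and Wei (Theorem \ref{rabe12}), transporting every step to the group $G=\mathrm{SU}(n,1)$, whose symmetric space $G/K$ is $\mathbf{H}_\bc^n$. Here $K=\mathrm{S}(\mathrm{U}(n)\times\mathrm{U}(1))$ is a maximal compact subgroup, and one checks $\dim G=(n+1)^2-1=n^2+2n$ while $\dim K=n^2$, so the fibre dimension matches $\dim\mathbf{H}_\bc^n=2n$. First I would fix the Cartan decomposition $\mathfrak g=\mathfrak k\oplus\mathfrak p$ of $\mathfrak g=\mathfrak{su}(n,1)$ with Cartan involution $\theta$, and equip $\mathfrak g$ with the positive-definite, $\mathrm{Ad}(K)$-invariant inner product $\langle X,Y\rangle=-B(X,\theta Y)$ built from the Killing form $B$, rescaled on $\mathfrak p$ so that the induced metric on $G/K$ is exactly the complex hyperbolic metric (sectional curvatures in $[-4,-1]$). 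This makes the projection $\pi:G\to G/K=\mathbf{H}_\bc^n$ a Riemannian submersion whose fibres are the cosets of the compact group $K$, all mutually isometric; that is the structural fact driving the whole estimate.

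The main obstacle is the explicit sectional-curvature bound. I would compute the sectional curvatures of the left-invariant metric on $G$ from the structure constants of $\mathfrak{su}(n,1)$, using Milnor's formula for left-invariant metrics together with O'Neill's submersion formulas, which express the curvature of the total space $G$ in terms of the (negative) curvature of the base $\mathbf{H}_\bc^n$, the bi-invariant curvature of the fibre $K$, and the $A$-tensor measuring the failure of the horizontal distribution to be integrable. The $A$-tensor terms are the delicate part: they contribute positive curvature and must be bounded uniformly over all $2$-planes. Working in the root-space/matrix realization of $\mathfrak{su}(n,1)$ gives explicit brackets, and a careful supremum over unit decomposable $2$-vectors should yield the upper bound $\Delta=36n+21$. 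Pinning down this constant, rather than merely showing the curvature is $O(n)$, is where the real work lies.

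With $\Delta=36n+21$ in hand, I would invoke H. C. Wang's embedding theorem: for any discrete $\Gamma\le G$, the quotient $\Gamma\backslash G$ contains an embedded metric ball whose radius $r_W$ is the explicit constant ($r_W\approx 0.06925$) furnished by Wang's estimate applied to this family with curvature bounded by $\Delta$. Since the sectional curvature of $G$ is bounded above by $\Delta$, Günther's volume comparison theorem bounds the volume of this ball below by that of a ball of the same radius in the simply connected space form of constant curvature $\Delta$ and dimension $m=n^2+2n$, namely
\[
\mathrm{Vol}\bigl(\Gamma\backslash G\bigr)\ \ge\ \frac{2\pi^{m/2}}{\Gamma(m/2)\,\Delta^{m/2}}\int_0^{\min[\,r_W\sqrt{\Delta}\,,\,\pi]}(\sin\rho)^{m-1}\,d\rho,
\]
where the cap at $\pi$ reflects that the comparison sphere is fully swept out there. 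Substituting $m=n^2+2n$, $\Delta=36n+21$ and $r_W=0.06925$ reproduces the integral and the factor $\Gamma(\tfrac{n^2+2n}{2})(36n+21)^{(n^2+2n)/2}$ in $\mathcal C(n)$.

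Finally I would descend through the submersion. Because $\pi:\Gamma\backslash G\to\Gamma\backslash G/K=\fq$ is a Riemannian submersion with fibre $K$, Fubini for submersions gives $\mathrm{Vol}(\Gamma\backslash G)=\mathrm{Vol}(K)\cdot\mathrm{Vol}(\fq)$, hence
\[
\mathrm{Vol}(\fq)\ \ge\ \frac{1}{\mathrm{Vol}(K)}\cdot\frac{2\pi^{m/2}}{\Gamma(m/2)\,\Delta^{m/2}}\int_0^{\min[0.06925\sqrt{36n+21},\,\pi]}(\sin\rho)^{n^2+2n-1}\,d\rho.
\]
It remains to evaluate $\mathrm{Vol}(K)$ for the metric induced from $\langle\cdot,\cdot\rangle$; this is the volume of (essentially) $\mathrm U(n)$, which factors as a product of odd-sphere volumes and equals $\pi^{(n^2+n)/2}/\bigl(2^{n^2+n}\,(n-1)!\cdots 2!\,1!\bigr)$ up to the normalization fixed above. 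Dividing, the spherical and factorial factors combine into $2^{n^2+n+1}\pi^{n/2}(n-1)!\cdots 1!$, yielding exactly $\mathcal C(n)$. The one point demanding care here is bookkeeping the metric normalization consistently between $\mathfrak p$ and $\mathfrak k$, since any rescaling that fixes the hyperbolic metric on the base simultaneously rescales both $\Delta$ and $\mathrm{Vol}(K)$, and these must be tracked so that the final constant is correct.
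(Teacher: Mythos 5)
First, a point of orientation: the paper does not prove Theorem \ref{rabe14} at all --- it is quoted verbatim from Adeboye--Wei \cite{abe14} as background, and the paper's own contributions are the quaternionic analogue (Theorem \ref{mainthm}) and, in Section \ref{realcomplex}, an \emph{improvement} of precisely this complex bound. That said, your proposal correctly reconstructs the route map of \cite{abe14}, which is the same one this paper transplants to ${\rm Sp}(n,1)$: Cartan decomposition and the rescaled canonical left-invariant metric; an explicit upper bound $\Delta$ on the sectional curvature of $G$; Wang's theorem producing an embedded ball of radius $R_G/2$ in $G/\Gamma$; G\"unther's comparison converting that ball into the explicit volume $V(d,\Delta,R_G/2)$; and the Riemannian submersion with compact totally geodesic fibres $K$ giving ${\rm Vol}(G/\Gamma)={\rm Vol}(K)\cdot{\rm Vol}(\fq)$. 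Your bookkeeping of ${\rm Vol}(K)$ is internally consistent with the stated constant.

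Two substantive inaccuracies remain. First, Wang's radius $r_W\approx 0.06925$ is \emph{not} ``furnished by Wang's estimate applied to this family with curvature bounded by $\Delta$'': in \cite{wan69} the number $R_G$ is determined by the constants $C_1=\sup\{N({\rm ad}X):X\in\mathfrak{p},\|X\|=1\}$ and $C_2$ (the analogue on $\mathfrak{k}$), tabulated in Wang's appendix, as the least positive zero of $F(t)=e^{C_1t}-2\sin C_2t-C_1t/(e^{C_1t}-1)$; the curvature bound $\Delta$ enters only through G\"unther's comparison. Second, the curvature bound on the total space $G$ is not obtained via O'Neill's formulas (the paper uses O'Neill only to verify that the base metric is normalized to holomorphic sectional curvature $-1$); it is read off from the connection formulas $\nabla_UV=\frac12[U,V]$, $\nabla_UX=\frac32[U,X]$, $\nabla_XY=\frac12[X,Y]$, $\nabla_XU=-\frac12[X,U]$ and the resulting curvature identities, with each bracket estimated by $C_1$ or $C_2$. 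Finally, the central quantitative input $\Delta=36n+21$ is asserted rather than derived in your sketch, and it is not the value a careful supremum actually produces: Proposition \ref{secub-r-c} of this paper shows the sectional curvatures of ${\rm SU}(n,1)$ are bounded by the dimension-free constant $\frac{13}{4}$, which is why Theorem \ref{imabe0812} improves the very statement you are proving. So your plan would, if executed honestly, yield a different (better) constant rather than reproduce $\mathcal{C}(n)$ as stated; to recover the literal statement you would have to follow the cruder estimates of \cite{abe14}.
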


As interest in quaternionic hyperbolic space has grown,  many results from  real and complex hyperbolic geometry  have been carried over to  the quaternionic  arena (see \cite{capa, cao16, kimp} et al). Due to the noncommutativity  of quaternions, the analogous problems  in quaternionic setting   are sometimes more complicated.

  Motivated by the ideas  of  Adeboye and Wei in \cite{abe12,abe14}, we  will consider the question of lower bound for the volume of a quaternionic hyperbolic orbifold  with the tools of  Lie group and  Riemannian  submersion.

We will  construct a   Riemannian submersion from the  quotient ${\rm Sp}(n,1)/\Gamma$ to the quotient  ${\bf H}_\bh^n/\Gamma$.  With this Riemannian submersion, we can employ Wang's result \cite[Theorem 5.2]{wan69} to produce an inscribed ball of radius $\frac{R_{{\rm Sp}(n,1)}}{2}$ in ${\bf H}_\bh^n/\Gamma$ and obtain the lower bound by a comparison theorem of Gunther \cite[Theorem 3.101]{gal90}.

   Our main result is the  following theorem.
 \begin{thm}\label{mainthm}
The volume of a quaternionic  hyperbolic $ n$ -orbifold is bounded below by $\mathcal{Q}(n)$,  an  explicit constant depending only on dimension, given by
$$ \mathcal{Q}(n)=\frac{\pi^{\frac{3n}{2}}(2n+1)!(2n-1)!\cdots!5!3!1!}{2^{n-1}
\Gamma(\frac{2n^2+5n+3}{2})\Gamma(\frac{4n+1}{2}) (\frac{3+4\sqrt{2}}{2})^{\frac{2n^{2}+5n+3}{2}}} \int^{0.2372}_0(\sin\rho)^{2n^2+5n+2} d\rho.$$
\end{thm}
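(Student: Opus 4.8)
The plan is to express $\mathcal{Q}(n)$ as a lower bound for ${\rm vol}({\rm Sp}(n,1)/\Gamma)$ divided by the volume of a fiber of a Riemannian submersion. First I would fix the Cartan decomposition $\mathfrak{sp}(n,1)=\mathfrak{k}\oplus\mathfrak{p}$, where $\mathfrak{k}={\rm Lie}(K)$ with $K={\rm Sp}(n)\times{\rm Sp}(1)$ and $\dim_{\br}\mathfrak{p}=4n$, and equip $G={\rm Sp}(n,1)$ with the left-invariant metric induced by the inner product $-B|_{\mathfrak{k}}+B|_{\mathfrak{p}}$, $B$ the Killing form. Since this inner product is ${\rm Ad}(K)$-invariant, the projection $G\to G/K={\bf H}_\bh^n$ is a Riemannian submersion whose fibers are the cosets $gK$, each isometric to $K$. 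Because $\Gamma\subset G$ acts freely on $G$ by left translation even when $\Gamma$ has torsion, the space $\Gamma\backslash G$ is a smooth manifold and the submersion descends to $\Gamma\backslash G\to{\bf H}_\bh^n/\Gamma$; a fiber analysis shows the fiber over a point with stabilizer of order $m$ has volume ${\rm vol}(K)/m$, so the coarea formula yields ${\rm vol}({\rm Sp}(n,1)/\Gamma)={\rm vol}(K)\cdot{\rm vol}_{\rm orb}({\bf H}_\bh^n/\Gamma)$. This reduces the theorem to bounding ${\rm vol}({\rm Sp}(n,1)/\Gamma)$ from below and computing ${\rm vol}(K)$.

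Next I would produce an explicit dimension-free upper bound $\Delta=\frac{3+4\sqrt2}{2}$ for the sectional curvature of this left-invariant metric. Via the Koszul/Milnor formulas, the sectional curvature is a quadratic expression in the structure constants of $\mathfrak{sp}(n,1)$ in an orthonormal basis adapted to $\mathfrak{k}\oplus\mathfrak{p}$; estimating the brackets term by term should give the stated bound. I expect this curvature estimate to be the main obstacle: one must choose the orthonormal basis carefully, handle the noncommutativity of $\bh$ correctly in the bracket relations, and control the cross terms uniformly in $n$ so that the bound does not grow with dimension. The point of normalizing by the Killing form is precisely that it should absorb the dimension here, in contrast to the $n$-dependent bounds $2+9n$ and $36n+21$ of the real and complex settings.

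With $\Delta$ in hand I would invoke Wang's Theorem 5.2 to obtain a ball of radius $R_{{\rm Sp}(n,1)}/2$ embedded in a fundamental domain for $\Gamma$ in ${\rm Sp}(n,1)/\Gamma$, where $R_{{\rm Sp}(n,1)}$ is the explicit radius Wang furnishes in terms of $\Delta$, so that ${\rm vol}({\rm Sp}(n,1)/\Gamma)$ is at least the volume of this ball. Gunther's comparison theorem then bounds that volume below by the volume of a ball of the same radius in the model space of constant curvature $\Delta$ and dimension $d=\dim{\rm Sp}(n,1)=2n^2+5n+3$, namely $\frac{2\pi^{d/2}}{\Gamma(d/2)}\,\Delta^{-d/2}\int_0^{\sqrt{\Delta}\,R_{{\rm Sp}(n,1)}/2}(\sin\rho)^{d-1}\,d\rho$. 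This supplies the factors $\Gamma(\frac{2n^2+5n+3}{2})$ and $(\frac{3+4\sqrt2}{2})^{(2n^2+5n+3)/2}$ together with the integral $\int_0^{0.2372}(\sin\rho)^{2n^2+5n+2}\,d\rho$, the numerical limit being $\sqrt{\Delta}\,R_{{\rm Sp}(n,1)}/2$, which already lies below $\pi$ so that no truncation is required (unlike the real and complex cases, where the limit depends on $n$ and must be capped at $\pi$).

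Finally I would compute the denominator ${\rm vol}(K)={\rm vol}({\rm Sp}(n))\,{\rm vol}({\rm Sp}(1))$ for the metric induced by $-B|_{\mathfrak{k}}$. Iterating the fibrations ${\rm Sp}(k-1)\hookrightarrow{\rm Sp}(k)\to S^{4k-1}$ expresses ${\rm vol}({\rm Sp}(n))$ through the odd factorials $(2n-1)!(2n-3)!\cdots1!$ and powers of $\pi$, while ${\rm vol}({\rm Sp}(1))={\rm vol}(S^3)=2\pi^2$; tracking the Killing-form scaling of the metric then accounts for the remaining factorial $(2n+1)!$, the factor $\Gamma(\frac{4n+1}{2})$, the power $2^{n-1}$, and the net power $\pi^{3n/2}$. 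Dividing the Gunther bound by ${\rm vol}(K)$ and simplifying yields exactly $\mathcal{Q}(n)$. The only genuinely difficult step is the uniform curvature bound of the second paragraph; the rest is the assembly of standard volume formulas together with the two cited theorems, so I would present the curvature estimate in full and treat the constant-chasing as routine.
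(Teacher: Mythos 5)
Your proposal follows the paper's argument essentially step for step: the Riemannian submersion ${\rm Sp}(n,1)/\Gamma\to{\bf H}_\bh^n/\Gamma$ with totally geodesic fibers ${\rm Sp}(n)\times{\rm Sp}(1)$, Wang's embedded ball of radius $R_{{\rm Sp}(n,1)}/2$, the dimension-free sectional curvature bound $\frac{3+4\sqrt{2}}{2}$, Gunther's comparison theorem, and the classical volume formula for the fiber. The one detail to adjust is the normalization: the paper works with the scaled metric $\widetilde{g}=\frac{1}{2(n+2)}g$ rather than the bare Killing-form metric you describe, and it is this scaling that makes the base have holomorphic sectional curvature $-1$, gives $C_1=1$ and $C_2=\sqrt{2}$, and hence produces the stated curvature bound together with the integration limit $0.2372=\sqrt{(3+4\sqrt{2})/2}\cdot(0.228/2)$.
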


As in \cite{abe12,abe14}, the volume bounds for hyperbolic orbifolds provide  information
on the order of the symmetry groups of hyperbolic manifolds. Following Hurwitz's
formula for groups acting on surfaces, we have the following corollary.

\begin{cor}
Let $M$ be a quaternionic hyperbolic n-manifold. Let $H$ be a group of isometries
of $M$. Then
$$|H|\leq \frac{{\rm Vol}(M)}{\mathcal{Q}(n)}.$$
\end{cor}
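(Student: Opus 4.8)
The plan is to realize the quotient $M/H$ as a quaternionic hyperbolic orbifold and then apply Theorem \ref{mainthm} to it. Since $M$ is a quaternionic hyperbolic $n$-manifold, I would write $M={\bf H}_\bh^n/\Gamma$ where $\Gamma\subset{\rm Sp}(n,1)$ is a discrete, torsion-free subgroup, so that the projection $p:{\bf H}_\bh^n\to M$ is the universal covering with deck group $\Gamma$. If ${\rm Vol}(M)=\infty$ the asserted inequality is vacuous, so I may assume ${\rm Vol}(M)<\infty$; in that case the isometry group of $M$ is compact and its discrete subgroup $H$ is finite, which is what makes $|H|$ meaningful.

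The key step is a lifting argument. Each isometry $h\in H$ lifts along the universal cover to an isometry $\tilde h$ of ${\bf H}_\bh^n$, unique up to composition with an element of $\Gamma$. Collecting all such lifts produces a subgroup $\Gamma'\subset{\rm Sp}(n,1)$ that contains $\Gamma$ as a normal subgroup with $\Gamma'/\Gamma\cong H$. Because $[\Gamma':\Gamma]=|H|<\infty$ and $\Gamma$ is discrete, $\Gamma'$ is again discrete, so $M/H={\bf H}_\bh^n/\Gamma'$ is a quaternionic hyperbolic $n$-orbifold in the sense of the introduction.

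Next I would record the multiplicativity of volume under the finite covering ${\bf H}_\bh^n/\Gamma\to{\bf H}_\bh^n/\Gamma'$, giving
$$ {\rm Vol}(M/H)=\frac{{\rm Vol}(M)}{[\Gamma':\Gamma]}=\frac{{\rm Vol}(M)}{|H|}. $$
Applying Theorem \ref{mainthm} to the orbifold $M/H$ yields ${\rm Vol}(M/H)\ge\mathcal{Q}(n)$, and combining the two displays gives ${\rm Vol}(M)/|H|\ge\mathcal{Q}(n)$, which rearranges to the claimed bound $|H|\le{\rm Vol}(M)/\mathcal{Q}(n)$.

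I expect the only genuinely delicate point to be the lifting step, namely verifying that the lifts of the elements of $H$ together with $\Gamma$ assemble into a single discrete group $\Gamma'$ inside ${\rm Sp}(n,1)$ with $\Gamma'/\Gamma\cong H$, and that this identification is compatible with the quotient $M/H$. Once that structural fact is in place, the volume bookkeeping and the invocation of Theorem \ref{mainthm} are routine, in direct analogy with the classical Hurwitz bound for automorphisms of surfaces.
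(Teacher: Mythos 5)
Your proof is correct and is exactly the intended argument: the paper states this corollary without proof, merely noting it follows the pattern of Hurwitz's formula, and the standard justification is precisely your lift of $H$ to a discrete overgroup $\Gamma'\supset\Gamma$ with $[\Gamma':\Gamma]=|H|$, followed by volume multiplicativity and an application of Theorem \ref{mainthm} to the orbifold $M/H={\bf H}_\bh^n/\Gamma'$. No gaps; the lifting step you flag as delicate is the standard identification of ${\rm Isom}(M)$ with $N(\Gamma)/\Gamma$ and works as you describe.
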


 The paper is organized  as follows.  Section \ref{qhs} contains some  necessary background material for quaternionic hyperbolic geometry.  In Section \ref{liespn1}, we will present the Cartan  decomposition of
 $\mathfrak{sp}(n,1)$ and  obtain the standard $\br$-vector space basis for it. Also we will define the canonical metric in Lie group ${\rm Sp}(n,1)$.   Section \ref{sectcur} aims to obtain the bound of the sectional curvatures of
 ${\rm Sp}(n,1)$  with respect to the scaled  canonical metric.  In order to obtain better estimate, we will use some formulae of the connection  and curvature which are slight different from those in \cite{abe14}.
 Section \ref{volume} contains the proof of Theorem \ref{mainthm} with the similar route map employed  in \cite{abe14}. In Section \ref{realcomplex},  we  reestimate the bound of the sectional curvatures of ${\rm SO_o}(n,1)$ and
${\rm SU}(n,1)$.  These new bounds imply  slight improvements  of the results in \cite{abe12,abe14}.

\section{Quaternionic hyperbolic space}\label{qhs}

 In this section, we give some necessary background
materials of quaternionic  hyperbolic geometry. More details can be
found in \cite{chen,kimp,most}.

We recall that a real quaternion is of the form $q=q_0+q_1{\bf i}+q_2{\bf j}+q_3{\bf k}\in \bh$
where $q_i\in \br$ and $ {\bf i}^2 = {\bf j}^2 = {\bf k}^2 = {\bf
i}{\bf j}{\bf k} = -1.$ Let $\overline{q}=q_0-q_1{\bf i}-q_2{\bf
j}-q_3{\bf k}$ and $|q|= \sqrt{\overline{q}q}=\sqrt{q_0^2+q_1^2+q_2^2+q_3^2}$  be the  conjugate  and modulus of $q$, respectively.

Let $\bh^{n,1}$ be the vector space of dimension n+1 over $\bh$ with
the unitary structure defined by the Hermitian form
$$
\langle{\bf z},\,{\bf w}\rangle={\bf w}^*J{\bf z}=
\overline{w_1}z_1+\cdots+\overline{w_n}z_n-\overline{w_{n+1}}z_{n+1},
$$
where ${\bf z}$ and ${\bf w}$ are the column vectors in $\bh^{n,1}$ with
entries $(z_1,\cdots,z_{n+1})$ and $(w_1,\cdots,w_{n+1})$
respectively, $\cdot^*$ denotes the conjugate transpose and $J$ is
the Hermitian matrix
$$J=\left(
      \begin{array}{cc}
        I_n & 0 \\
         0 & -1 \\
      \end{array}
    \right).$$
We define a  unitary transformation $g$ to be an automorphism
of $\bh^{n,1}$, that is, a linear bijection such that $\langle
g({\bf z}),\,g({\bf w})\rangle=\langle{\bf z},\,{\bf w}\rangle$ for
all ${\bf z}$ and ${\bf w}$ in $\bh^{n,1}$. We denote the group of all
unitary transformations by ${\rm Sp}(n,1)$, which  is the noncompact Lie group
\begin{equation}{\rm Sp}(n,1)=\{A\in {\rm {\rm GL}}(n+1,\bh): A^*JA=J\}.\end{equation}
Let
\begin{eqnarray*}
V_{-} &  = & \bigl\{{\bf z} \in \bh^{n,1}:\langle{\bf z},\,{\bf
z}\rangle<0\bigr\}.
\end{eqnarray*}
It is obvious that $V_-$ is invariant under $ {\rm Sp}(n,1)$.
Let $$\bP:\bh^{n,1}-\{0\}\to
\bh\mathbf{P}^n$$ be the canonical projection onto quaternionic projective space.  Quaternionic hyperbolic $n$-space,  ${\bf H}_\bh^n$, is defined to be the space $\bP(V_-)$ together the  Bergman metric. The Bergman metric  on ${\bf H}_\bh^n$ is given by the distance formula
\begin{equation*}
\cosh^2\frac{\rho(z,w)}{2}=\frac{\langle{\bf z},\,{\bf
w}\rangle \langle{\bf w},\,{\bf z}\rangle}{\langle{\bf z},\,{\bf
z}\rangle \langle{\bf w},\,{\bf w}\rangle},\ \ \mbox{where}\ \ {\bf
z}\in \bP^{-1}(z),{\bf w}\in \bP^{-1}(w).
 \end{equation*}
  The holomorphic isometry group of ${\bf H}_\bh^n$ with respect to the
Bergman metric is the projective unitary group ${\rm PSp}(n, 1)={\rm Sp}(n,1)/\pm I_{n+1}$ and acts
on $\bP(\bh^{n,1})$ by matrix multiplication.

Let $${\rm Sp}(n)=\{A\in GL(n,\bh):AA^*=I_n\}.$$
Since the  stabilizer of the point of  ${\bf H}_\bh^n$ with the homogeneous coordinates $(0,\cdots,0,1)$ is
$${\rm Sp}(n)\times {\rm Sp}(1)=\left\{ \left(
                                                \begin{array}{cc}
                                                  A & 0 \\
                                                  0 & q \\
                                                \end{array}
                                              \right): A\in {\rm Sp}(n),q\in {\rm Sp}(1) \right\},$$
we have the following identification
\begin{equation}{\bf H}_\bh^n={\rm Sp}(n,1)/{\rm Sp}(n)\times {\rm Sp}(1).\end{equation}

\section{ The Lie group ${\rm Sp}(n,1)$}\label{liespn1}
This section contains  some necessary materials of  the Lie group ${\rm Sp}(n,1)$ including the  Cartan  decomposition
 and the standard $\br$-vector space basis  of  the Lie algebra $ \mathfrak{sp}(n,1)$,  and  the canonical metric in Lie group ${\rm Sp}(n,1)$.

\subsection{The  Cartan  decomposition of  $ \mathfrak{sp}(n,1)$}
A matrix Lie group is a closed subgroup of  $ {\rm GL}(n,\mathbb{H}) $. Recall that for a square matrix $ X $,
$$ e^X=I+X+\frac{1}{2}X^2+\cdots. $$
The  Lie algebra of a matrix Lie group $G$ is a vector space, defined as the set of matrices $ X $ such that $e^{tX}\in G $, for all real numbers $ t $. The Lie algebra of ${\rm GL}(n,\mathbb{H}) $, denoted by $ \mathfrak{gl}(n,\mathbb{H}) $, is the set of $ n\times n $ matrices over $\mathbb{H}$.

The Lie algebra of $ {\rm Sp}(n,1) $ is defined and denoted by
$$ \mathfrak{sp}(n,1)=\{X \in \mathfrak{gl}(n+1,\mathbb{H}):\ \  JX^*J=-X\}.$$
The fixed point set of the Cartan involution $\theta(X)=JXJ$ is a maximal compact subgroup $K$ of ${\rm Sp}(n,1)$ isomorphic to ${\rm Sp}(n) \times {\rm Sp}(1)$.
The corresponding standard Cartan decomposition $$\mathfrak{sp}(n,1)=\mathfrak{k}+\mathfrak{p}$$ is given by
$$\mathfrak{k}=\left\{\left(\begin{array}{cc}
                   M & 0 \\
                   0 & q
                 \end{array}\right): M\in \mathfrak{sp}(n),q\in \mathfrak{sp}(1)\right\},$$
$$\mathfrak{p}=\left\{\left(\begin{array}{cc}
                   0 & Z \\
                   Z^* & 0
                 \end{array}\right): Z\in\bh^n\right\},$$
where $$\mathfrak{sp}(n)=\{M \in  \mathfrak{gl}(n,\mathbb{H}):   M+M^*=0\}.$$
The Lie bracket of a matrix Lie algebra is determined by matrix operations
$$ [X,Y]=XY-YX. $$

\begin{defi}\label{preli}
For each $ n $, let $ e_{jk} \in \mathfrak{gl}(n+1,\mathbb{H}) $ be the matrix with $ 1 $ in the $jk$ -position and  $0$ elsewhere.  We define \begin{equation}\label{albeij} \alpha_{jk}=(e_{jk} -e_{kj}),\ \ \beta_{jk}=(e_{jk}+e_{kj}).\end{equation}
\end{defi}
The following proposition describes the Lie bracket of $ \mathfrak{sp}(n,1)$.  The proof involves straightforward calculation form the fact $$e_{ij}e_{kl}=\delta_{jk}e_{il}$$ and the definitions of $\alpha_{ij}$ and $\beta_{ij}$ and therefore is omitted.

\begin{pro}\label{baseprd}{\rm  (cf. \cite[Proposition 2.2]{abe14})}
Let ${\bf I}_1={\bf i}, {\bf I}_2={\bf j}$ and  ${\bf I}_3={\bf k}$.  For $ 1\leq j<k\leq n,1\leq l<m\leq n$, we have the following equalities:
\begin{equation}\label{alal}[\alpha_{jk},\alpha_{lm}]=\delta_{kl}\alpha_{jm}+\delta_{km}\alpha_{lj}+\delta_{jm} \alpha_{kl}+\delta_{lj}\alpha_{mk} , \end{equation}
\begin{equation}\label{albeI}[\alpha_{jk},{\bf I}_t\beta_{lm}]={\bf I}_t(\delta_{kl}\beta_{jm}+\delta_{km}\beta_{jl}-\delta_{jm} \beta_{kl}-\delta_{lj}\beta_{km}) , t=1,2,3, \end{equation}
\begin{equation}\label{aleI}[\alpha_{jk},{\bf I}_te_{ii}]={\bf I}_t(\delta_{ki}\beta_{ji}-\delta_{ji}\beta_{ki}),t=1,2,3,  \end{equation}
\begin{equation} [\alpha_{jk},\beta_{l,n+1}]=\delta_{lk}\beta_{j,n+1}-\delta_{jl}\beta_{k,n+1}, \end{equation}
\begin{equation} [\alpha_{jk},{\bf I}_t\alpha_{l,n+1}]={\bf I}_t(\delta_{lk}\alpha_{j,n+1}-\delta_{lj}\alpha_{k,n+1}),t=1,2,3, \end{equation}
\begin{equation} [{\bf I}_t\beta_{jk},{\bf I}_t\beta_{lm}]=-(\delta_{kl}\alpha_{jm}+\delta_{km}\alpha_{jl}+\delta_{jm} \alpha_{kl}+\delta_{lj}\alpha_{km}), t=1,2,3, \end{equation}
\begin{equation} [{\bf I}_t\beta_{jk},{\bf I}_s\beta_{lm}]={\bf I}_t{\bf I}_s(\delta_{kl}\beta_{jm}+\delta_{km}\beta_{jl}+\delta_{jm} \beta_{kl}+\delta_{lj}\beta_{km}) , t\neq s, \end{equation}
\begin{equation} [{\bf I}_t\beta_{jk},{\bf I}_t e_{ii}]=-(\delta_{ki}\alpha_{ji}+\delta_{ji}\alpha_{ki}),t=1,2,3, \end{equation}
\begin{equation} [{\bf I}_t\beta_{jk},{\bf I}_s e_{ii}]={\bf I}_t{\bf I}_s(\delta_{ki}\beta_{ji}+\delta_{ji}\beta_{ki}), t\neq s, \end{equation}
\begin{equation} [{\bf I}_t\beta_{jk},\beta_{l,n+1}]={\bf I}_t(\delta_{lk}\alpha_{j,n+1}+\delta_{jl}\alpha_{k,n+1}), t=1,2,3, \end{equation}
\begin{equation} [{\bf I}_t\beta_{jk},{\bf I}_t\alpha_{l,n+1}]=-(\delta_{lk}\beta_{j,n+1}+\delta_{jl}\beta_{k,n+1}), t=1,2,3, \end{equation}
\begin{equation} [{\bf I}_t\beta_{jk},{\bf I}_s\alpha_{l,n+1}]={\bf I}_t{\bf I}_s(\delta_{lk}\alpha_{j,n+1}+\delta_{jl}\alpha_{k,n+1}), t\neq s, \end{equation}
\begin{equation} [{\bf I}_te_{ii},{\bf I}_te_{ii}]=0, t=1,2,3,\end{equation}
\begin{equation} [{\bf I}_te_{ii},{\bf I}_se_{ii}]=2{\bf I}_t{\bf I}_se_{ii}, t\neq s, \end{equation}
\begin{equation}\label{ebetaI}[{\bf I}_te_{ii},\beta_{j,n+1}]={\bf I}_t(\delta_{ij}\alpha_{i,n+1}+\delta_{i,n+1}\alpha_{ij}), t=1,2,3, \end{equation}
\begin{equation} [{\bf I}_te_{ii},{\bf I}_t\alpha_{j,n+1}]=-(\delta_{ij}\beta_{i,n+1}-\delta_{i,n+1}\beta_{ij}), t=1,2,3,\end{equation}
\begin{equation} [{\bf I}_te_{ii},{\bf I}_s\alpha_{j,n+1}]={\bf I}_t{\bf I}_s(\delta_{ij}\alpha_{i,n+1}-\delta_{i,n+1}\alpha_{ij}), t\neq s, \end{equation}
\begin{equation} [\beta_{j,n+1},\beta_{k,n+1}]=\alpha_{jk}, \end{equation}
\begin{equation} [\beta_{j,n+1},{\bf I}_t\alpha_{k,n+1}]={\bf I}_t(-\beta_{jk}+2\delta_{jk}e_{n+1,n+1}), t=1,2,3, \end{equation}
\begin{equation} [{\bf I}_t\alpha_{j,n+1},{\bf I}_t\alpha_{k,n+1}]=\alpha_{jk}, t=1,2,3, \end{equation}
\begin{equation} [{\bf I}_t\alpha_{j,n+1},{\bf I}_s\alpha_{k,n+1}]={\bf I}_s{\bf I}_t(\beta_{jk}+2\delta_{jk}e_{n+1,n+1}),  t\neq s.\end{equation}
\end{pro}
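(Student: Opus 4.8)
The plan is to verify every listed identity by direct matrix computation from the two defining facts $e_{ij}e_{kl}=\delta_{jk}e_{il}$ and $[X,Y]=XY-YX$, organized so that the quaternionic noncommutativity is isolated into a single bookkeeping rule. The crucial observation is that $\alpha_{pq}$, $\beta_{pq}$ and $e_{ii}$ are \emph{real} matrices, so an imaginary scalar factored to the left passes cleanly through a matrix product: for real matrices $A,B$ one has $({\bf I}_t A)({\bf I}_s B)={\bf I}_t{\bf I}_s(AB)$, since the real entries of $A$ and $B$ commute with the quaternion units. This reduces the whole proposition to three mechanical rules. If at most one of the two arguments carries an imaginary scalar, the bracket equals that scalar times the ordinary real commutator, e.g. $[\alpha_{jk},{\bf I}_t\beta_{lm}]={\bf I}_t[\alpha_{jk},\beta_{lm}]$. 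If both arguments carry the \emph{same} unit ${\bf I}_t$, then ${\bf I}_t^2=-1$ gives $[{\bf I}_tA,{\bf I}_tB]=-[A,B]$. If they carry \emph{distinct} units, then ${\bf I}_s{\bf I}_t=-{\bf I}_t{\bf I}_s$ forces the commutator to collapse onto the anticommutator: $[{\bf I}_tA,{\bf I}_sB]={\bf I}_t{\bf I}_s(AB+BA)$ for $t\neq s$.

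With these rules in hand I would reduce all the listed identities to computing, for the finite family $\{\alpha_{pq},\beta_{pq},e_{ii}\}$ of real matrices, their commutators $AB-BA$ and their anticommutators $AB+BA$. Each such product is a sum of at most four matrix units, obtained by expanding $\alpha,\beta$ and applying $e_{ij}e_{kl}=\delta_{jk}e_{il}$; collecting the surviving Kronecker deltas and re-packaging the result as $\alpha$'s or $\beta$'s (using $\delta_{jl}=\delta_{lj}$ to match the stated groupings) yields each relation. For instance $\beta_{jk}\beta_{lm}+\beta_{lm}\beta_{jk}=\delta_{kl}\beta_{jm}+\delta_{km}\beta_{jl}+\delta_{jm}\beta_{kl}+\delta_{lj}\beta_{km}$, which together with the third rule gives $[{\bf I}_t\beta_{jk},{\bf I}_s\beta_{lm}]$ for $t\neq s$, while the same pair of matrices run through the second rule antisymmetrizes to the $\alpha$-valued bracket. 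The relations carrying $e_{ii}$ use the idempotence $e_{ii}e_{ii}=e_{ii}$, so that $\{e_{ii},e_{ii}\}=2e_{ii}$ reproduces the $\mathfrak{sp}(1)$-bracket $[{\bf I}_te_{ii},{\bf I}_se_{ii}]=2{\bf I}_t{\bf I}_se_{ii}$.

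The only genuine subtlety, and hence the step demanding the most care, is the distinguished role of the index $n+1$ combined with the sign conventions of the quaternion units. The index $n+1$ sits in the negative slot of the Hermitian form $J$, so the generators $\beta_{j,n+1}$, ${\bf I}_t\alpha_{j,n+1}$ and ${\bf I}_te_{n+1,n+1}$ lie in the $\mathfrak{p}$-part (resp. the $\mathfrak{sp}(1)$-part), and their mutual brackets feed back into $\mathfrak{k}$; tracking which products land on $e_{n+1,n+1}$ is exactly what produces the $2\delta_{jk}e_{n+1,n+1}$ terms in the final block of relations. I would therefore organize the verification by the Cartan grading — brackets inside $\mathfrak{k}$, the mixed $[\mathfrak{k},\mathfrak{p}]$ brackets, and the $[\mathfrak{p},\mathfrak{p}]$ brackets — and within each block apply the three reduction rules above, so that the quaternionic sign-tracking ${\bf I}_t{\bf I}_s=\pm{\bf I}_r$ is confined to the scalar prefactor and never entangled with the matrix-unit algebra. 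This separation is what keeps an otherwise error-prone calculation honest.
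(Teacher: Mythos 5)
Your proposal is correct and follows essentially the same route the paper indicates: the paper omits the verification, stating only that it is a straightforward calculation from $e_{ij}e_{kl}=\delta_{jk}e_{il}$ and the definitions of $\alpha_{jk}$ and $\beta_{jk}$, which is exactly the computation you carry out. Your three reduction rules (scalar pull-through for real matrices, $[{\bf I}_tA,{\bf I}_tB]=-[A,B]$, and $[{\bf I}_tA,{\bf I}_sB]={\bf I}_t{\bf I}_s(AB+BA)$ for $t\neq s$) are a clean and accurate way to organize it, and spot-checks (e.g.\ the $2\delta_{jk}e_{n+1,n+1}$ terms) confirm they reproduce the stated identities.
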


By the above proposition, we can verify the following proposition.
\begin{pro}
The {\it Cartan  decomposition}  $ \mathfrak{sp}(n,1)=\mathfrak{k}\oplus\mathfrak{p}$ have the following properties:
\begin{eqnarray}\label{dek}\mathfrak{k}&=&{\rm span}\{\alpha_{jk},{\bf i}\beta_{jk},{\bf j}\beta_{jk},{\bf k}\beta_{jk},1\leq j<k\leq n, {\bf i}e_{ii},{\bf j}e_{ii},{\bf k}e_{ii},i=1,2\cdots n+1\},\\
 \label{dep}\mathfrak{p}&=&{\rm span}\{\beta_{j,n+1},\ {\bf i}\alpha_{j,n+1},\ {\bf j}\alpha_{j,n+1},\ {\bf k}\alpha_{j,n+1},\ 1\leq j\leq n\}, \end{eqnarray}
\begin{equation}\label{kprelation}[\mathfrak{k},\mathfrak{k}]\subset \mathfrak{k},\ [\mathfrak{k},\mathfrak{p}]\subset\mathfrak{p},\  [\mathfrak{p},\mathfrak{p}]\subset\mathfrak{k}.    \end{equation}
\end{pro}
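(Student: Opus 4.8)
The plan is to treat the two parts of the statement separately: first the explicit spanning sets (\ref{dek}) and (\ref{dep}), then the bracket relations (\ref{kprelation}). For the spanning sets I would start from the block descriptions of $\mathfrak{k}$ and $\mathfrak{p}$ given just before Definition \ref{preli} and expand a general element in terms of the real coordinates of its quaternionic entries. An element of $\mathfrak{sp}(n)$ satisfies $M+M^*=0$, i.e. $M_{kj}=-\overline{M_{jk}}$. Writing each entry as $M_{jk}=a+b\mathbf{i}+c\mathbf{j}+d\mathbf{k}$, the diagonal condition $M_{ii}=-\overline{M_{ii}}$ forces the real part of $M_{ii}$ to vanish, so the diagonal contributes exactly the terms $\mathbf{i}e_{ii},\mathbf{j}e_{ii},\mathbf{k}e_{ii}$; for $j<k$ the pair $(M_{jk},M_{kj})$ is parametrized by four real numbers, and separating the real from the three imaginary parts yields $a\,\alpha_{jk}+b\,\mathbf{i}\beta_{jk}+c\,\mathbf{j}\beta_{jk}+d\,\mathbf{k}\beta_{jk}$, since $\alpha_{jk}=e_{jk}-e_{kj}$ carries the antisymmetric real part while $\mathbf{I}_t\beta_{jk}$ carries the symmetric imaginary parts. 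Adjoining the pure imaginary corner $q\in\mathfrak{sp}(1)$, which supplies $\mathbf{i}e_{n+1,n+1},\mathbf{j}e_{n+1,n+1},\mathbf{k}e_{n+1,n+1}$, gives precisely (\ref{dek}). The same bookkeeping applied to $\begin{pmatrix}0&Z\\ Z^*&0\end{pmatrix}$, using that the $(n+1,j)$ entry is $\overline{Z_j}$, sends the real part of $Z_j$ to $\beta_{j,n+1}$ and its imaginary parts to $\mathbf{I}_t\alpha_{j,n+1}$, which is (\ref{dep}).

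For the bracket relations I would avoid a term-by-term scan of Proposition \ref{baseprd} and instead use the Cartan involution directly. Since $J^2=I_{n+1}$, we have $J=J^{-1}$, so $\theta(X)=JXJ=JXJ^{-1}$ is conjugation by $J$ and hence a Lie algebra automorphism, $\theta([X,Y])=[\theta(X),\theta(Y)]$; moreover $X\in\mathfrak{sp}(n,1)$ is equivalent to $X^*=-JXJ$, from which one checks $\theta$ preserves $\mathfrak{sp}(n,1)$. A direct computation with $J=\mathrm{diag}(I_n,-1)$ shows that $\theta$ fixes the block-diagonal matrices and negates the block-antidiagonal ones, so $\mathfrak{k}$ and $\mathfrak{p}$ are exactly the $+1$- and $-1$-eigenspaces of $\theta$. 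Applying the automorphism property to pairs drawn from these eigenspaces then gives $\theta([X,Y])=(\pm1)(\pm1)[X,Y]$, which yields $[\mathfrak{k},\mathfrak{k}]\subset\mathfrak{k}$, $[\mathfrak{k},\mathfrak{p}]\subset\mathfrak{p}$ and $[\mathfrak{p},\mathfrak{p}]\subset\mathfrak{k}$ all at once.

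I expect the bracket relations to be the easy part and the main care to lie in the spanning-set computation, specifically in correctly pairing the skew-Hermitian constraint $M_{kj}=-\overline{M_{jk}}$ with the antisymmetric generators $\alpha_{jk}$ and the symmetric generators $\mathbf{I}_t\beta_{jk}$, and in not losing the diagonal imaginary degrees of freedom $\mathbf{I}_te_{ii}$. As a consistency check I would count real dimensions: (\ref{dek}) lists $4\binom{n}{2}+3(n+1)=2n^2+n+3$ generators and (\ref{dep}) lists $4n$, summing to $2n^2+5n+3=\dim_{\br}\mathfrak{sp}(n,1)$, which confirms that the proposed spanning sets are in fact bases. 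If one prefers a self-contained argument not invoking $\theta$, the containments in (\ref{kprelation}) can alternatively be read off directly from the bracket table of Proposition \ref{baseprd}, noting that every bracket of two $\mathfrak{k}$-type or two $\mathfrak{p}$-type generators lands among the $\alpha$'s, $\mathbf{I}_t\beta$'s and $\mathbf{I}_te_{ii}$'s, while every mixed bracket lands among the $\beta_{j,n+1}$'s and $\mathbf{I}_t\alpha_{j,n+1}$'s.
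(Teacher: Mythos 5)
Your proposal is correct, and for the bracket relations (\ref{kprelation}) it takes a genuinely different route from the paper. The paper simply states that the proposition ``can be verified'' from Proposition \ref{baseprd}, i.e.\ by scanning the bracket table and observing that every product of two $\mathfrak{k}$-type or two $\mathfrak{p}$-type generators lands among the $\alpha_{jk}$, $\mathbf{I}_t\beta_{jk}$, $\mathbf{I}_te_{ii}$, while every mixed product lands among the $\beta_{j,n+1}$, $\mathbf{I}_t\alpha_{j,n+1}$ --- exactly the fallback you mention at the end. Your primary argument instead exploits that $\theta(X)=JXJ=JXJ^{-1}$ is a Lie algebra automorphism preserving $\mathfrak{sp}(n,1)$ (since $X^*=-JXJ$ gives $J(JXJ)^*J=X^*=-JXJ$), with $\mathfrak{k}$ and $\mathfrak{p}$ as its $+1$- and $-1$-eigenspaces, so that $\theta([X,Y])=(\pm1)(\pm1)[X,Y]$ delivers all three containments at once. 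This is cleaner and independent of the $21$-case bracket table; what the paper's route buys is that the table has to be computed anyway for the curvature estimates, so the verification comes for free there. Your treatment of the spanning sets (\ref{dek})--(\ref{dep}) by expanding the skew-Hermitian and off-diagonal blocks into real coordinates, together with the dimension count $2n^2+n+3+4n=2n^2+5n+3=\dim_{\mathbb R}\mathfrak{sp}(n,1)$ confirming these are bases, is sound and is the implicit content of the paper's block description.
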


\subsection{The Canonical Metric of ${\rm Sp}(n,1) $}
 \begin{defi}\label{defstba} The standard $\br$-vector space  basis for $ \mathfrak{sp}(n,1),$  denoted by $\mathfrak{B}$, consists of the following set of $ 2n^2+5n+3 $ matrices:
   \begin{itemize}
     \item $ \alpha_{jk},\ {\bf i}\beta_{jk},\ {\bf j}\beta_{jk},\ {\bf k}\beta_{jk},\ \ 1\leq j<k\leq n$; there are $2n^2-2n$ of these.
     \item  $\sqrt{2}{\bf i}e_{ii}, \sqrt{2}{\bf j}e_{ii}, \sqrt{2}{\bf k}e_{ii},\ \ i=1,2\cdots n+1$; there are $3n+3$ of these.
     \item   $\beta_{j,n+1},\  {\bf i}\alpha_{j,n+1},\ {\bf j}\alpha_{j,n+1},\ {\bf k}\alpha_{j,n+1},\ \ 1\leq j\leq n$; there are $4n$ of these.
   \end{itemize}
    \end{defi}
For $ X \in \mathfrak{sp}(n,1) $, the adjoint action of $ X $ is the $ \mathfrak{sp}(n,1)$-endomorphism defined by the Lie bracket
$$ {\rm ad} X(Y)=[X,Y].$$
We relabel the above standard  basis of $ \mathfrak{sp}(n,1)$ according to the order of sequence  as $e_1,\cdots,e_{2n^2+5n+3}$.  Let $C_{ij}\in \br^{2n^2+5n+3}$ be  the coefficients of ${\rm ad } e_i(e_j)$ represented by the basis.   That is
\begin{equation}\label{defcij} {\rm ad }\, e_i(e_j)=[e_i,e_j]=(e_1,e_2,\cdots,e_{2n^2+5n+3})C_{ij}.\end{equation}
 We mention that $C_{ij}$ can be read off from  Proposition \ref{baseprd}.

 Let $$X=\sum_{i=1}^{2n^2+5n+3}x_ie_i, \  x_i\in\br.$$ Then
\begin{equation}\label{adx} {\rm ad} X=\left(\sum_{i=1}^{2n^2+5n+3}x_iC_{i1},\cdots,\sum_{i=1}^{2n^2+5n+3}x_iC_{i,2n^2+5n+3}\right). \end{equation}
We note that ${\rm ad} X$ is a real square matrix of dimension $2n^2+5n+3$.

The Killing form on $ \mathfrak{sp}(n,1) $ is a symmetric bilinear form given by $$ B(X,Y)={\rm trace}({\rm ad}X{\rm ad}Y).$$
Let  $$Y=\sum_{i=1}^{2n^2+5n+3}y_ie_i,\   y_i\in\br.$$  Then
\begin{equation}\label{bxy}B(X,Y)=-8(n+2)\sum_{i=1}^{2n^2+n+3}x_iy_i+8(n+2)\sum_{i=2n^2+n+4}^{2n^2+5n+3}x_iy_i. \end{equation}
The Killing form enjoys the following important property:
\begin{equation}\label{skewb} B([X,Y],Z)+B(Y,[X,Z])=0,\  \mbox{for}\ \ X,Y,Z\in\mathfrak{sp}(n,1) .  \end{equation}
A positive definite inner product on $ \mathfrak{sp}(n,1)$ is  defined by
\begin{equation}\label{innprd} \langle X,Y\rangle=\left\{\begin{aligned}
 B(X,Y)\ \ \ \mbox{for}\ X,Y\in\mathfrak{p},\\
-B(X,Y)\ \  \mbox{for}\  X,Y\in\mathfrak{k},\\
0\ \ \  \  \ \  \mbox{otherwise}.\end{aligned}\right. \end{equation}
By identifying $ \mathfrak{sp}(n,1) $ with the tangent space at the identity of ${\rm Sp}(n,1)$, we can extend  $\langle\cdot,\cdot\rangle$ to a left invariant Riemannian metric over ${\rm Sp}(n,1).$  We denote this metric by $g$ and refer to it as the canonical metric for $ {\rm Sp}(n,1) $.

By (\ref{bxy}) and (\ref{innprd}), we have the following lemma.
\begin{lem}\label{disu}
For $ X, Y\in\mathfrak{B}$,
$$ \langle X,Y\rangle=\left\{\begin{aligned}
8(n+2)\ \ \ \ \if\ \ \ X=Y\\
0 \ \ \ \ {\rm otherwise}.\end{aligned}\right.$$
\end{lem}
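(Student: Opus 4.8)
The plan is to prove Lemma \ref{disu} by directly combining the explicit Killing-form formula \eqref{bxy} with the definition \eqref{innprd} of the inner product, reading off the result one basis vector at a time. The key structural fact I would use is that the relabeled basis $e_1,\dots,e_{2n^2+5n+3}$ is arranged so that the first $2n^2+n+3$ elements span $\mathfrak{k}$ and the remaining $4n$ elements (indices $2n^2+n+4$ through $2n^2+5n+3$) span $\mathfrak{p}$; this matches exactly the index ranges appearing in \eqref{bxy}. Thus for two elements $X,Y\in\mathfrak{B}$ I first observe that if one lies in $\mathfrak{k}$ and the other in $\mathfrak{p}$, then $\langle X,Y\rangle=0$ by the ``otherwise'' clause of \eqref{innprd}, consistent with the claim since distinct basis vectors give $0$.

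Next I would handle the two homogeneous cases. If $X,Y\in\mathfrak{p}$, formula \eqref{bxy} restricted to the $\mathfrak{p}$-indices reads $B(X,Y)=8(n+2)\sum x_iy_i$, and since $\langle X,Y\rangle=B(X,Y)$ on $\mathfrak{p}$, orthonormality of the coordinate expansion gives $\langle X,Y\rangle=8(n+2)$ when $X=Y$ and $0$ otherwise. If $X,Y\in\mathfrak{k}$, formula \eqref{bxy} gives $B(X,Y)=-8(n+2)\sum x_iy_i$, and because $\langle X,Y\rangle=-B(X,Y)$ on $\mathfrak{k}$, the two sign changes cancel and I again obtain $\langle X,Y\rangle=8(n+2)$ when $X=Y$ and $0$ otherwise. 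Combining the three cases yields the stated formula uniformly over all $X,Y\in\mathfrak{B}$.

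The only genuine content beyond bookkeeping is verifying the Killing-form computation \eqref{bxy} itself, i.e.\ that $B(e_i,e_j)=\mp 8(n+2)\delta_{ij}$ with the sign determined by whether $e_i$ lies in $\mathfrak{k}$ or $\mathfrak{p}$. This I would establish by computing $\mathrm{trace}(\mathrm{ad}\,e_i\,\mathrm{ad}\,e_j)$ from the structure constants $C_{ij}$ encoded in Proposition \ref{baseprd}, using the orthogonality of the decomposition $[\mathfrak{k},\mathfrak{k}]\subset\mathfrak{k}$, $[\mathfrak{k},\mathfrak{p}]\subset\mathfrak{p}$, $[\mathfrak{p},\mathfrak{p}]\subset\mathfrak{k}$ from \eqref{kprelation} to see that the trace vanishes for $i\neq j$ across the grading. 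Since \eqref{bxy} is already stated in the excerpt, however, for the purpose of this lemma I may take it as given and the proof reduces to the elementary case analysis above.

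The main obstacle I anticipate is not conceptual but notational: one must be confident that the relabeling of $\mathfrak{B}$ into $e_1,\dots,e_{2n^2+5n+3}$ preserves the index ranges assumed in \eqref{bxy}, so that the $\mathfrak{k}$-block and $\mathfrak{p}$-block of the coordinate sums line up correctly with the stated cutoff $2n^2+n+3$. One subtlety worth double-checking is the scaling factor $\sqrt{2}$ attached to the diagonal generators $\sqrt{2}\,{\bf I}_t e_{ii}$ in Definition \ref{defstba}: this factor is precisely what normalizes those basis elements so that they too satisfy $\langle X,X\rangle=8(n+2)$ rather than some other constant, and I would confirm that \eqref{bxy} already reflects this normalization. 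Once these bookkeeping points are settled, the lemma follows immediately.
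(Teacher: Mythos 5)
Your proposal is correct and follows exactly the route the paper intends: the paper gives no written proof beyond the sentence ``By (\ref{bxy}) and (\ref{innprd}), we have the following lemma,'' and your case analysis (mixed $\mathfrak{k}$--$\mathfrak{p}$ pairs vanish by the ``otherwise'' clause, while the two homogeneous cases reduce to $\pm8(n+2)\delta_{ij}$ with the signs cancelling) is precisely the omitted bookkeeping, with the index cutoff $2n^2+n+3$ correctly matching the $2n^2-2n+3n+3$ elements of $\mathfrak{B}$ lying in $\mathfrak{k}$.
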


\begin{cor}\label{cor1} The matrix representation for the canonical metric $g$ of  $ {\rm Sp}(n,1) $ is the square $2n^2+5n+3$ diagonal matrix
\begin{equation}\label{cmetric}\left(
      \begin{array}{cccc}
       8(n+2) & \ & \ & \  \\
        \ & 8(n+2) & \ & \  \\
         \ & \ & \ddots & \  \\
         \ & \ & \ & 8(n+2)  \\
      \end{array}
    \right).\end{equation}
    \end{cor}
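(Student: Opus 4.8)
The plan is to derive the matrix representation of the canonical metric directly from Lemma \ref{disu}, since that lemma already contains essentially all the content we need. Recall that the canonical metric $g$ is, by construction, the left-invariant extension of the inner product $\langle\cdot,\cdot\rangle$ defined in (\ref{innprd}). Because left translation is an isometry, the matrix representation of $g$ in any left-invariant frame coincides with the Gram matrix of the inner product on $\mathfrak{sp}(n,1)$ computed with respect to the chosen basis. Therefore it suffices to evaluate $\langle e_i, e_j\rangle$ for all pairs of basis vectors $e_i, e_j$ from the standard basis $\mathfrak{B}$, and to arrange these numbers into a $(2n^2+5n+3)\times(2n^2+5n+3)$ matrix with $(i,j)$-entry equal to $\langle e_i, e_j\rangle$.

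The key step is simply to read off the values from Lemma \ref{disu}. That lemma asserts precisely that for $X, Y \in \mathfrak{B}$ we have $\langle X, Y\rangle = 8(n+2)$ when $X = Y$ and $\langle X, Y\rangle = 0$ otherwise. Applied to the relabeled basis $e_1,\dots,e_{2n^2+5n+3}$, this says that the Gram matrix has diagonal entries all equal to $8(n+2)$ and all off-diagonal entries equal to $0$. That is exactly the scalar matrix $8(n+2)\,I_{2n^2+5n+3}$ displayed in (\ref{cmetric}). Thus the corollary is an immediate transcription of Lemma \ref{disu} into matrix form, and I would present it in one or two sentences.

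There is essentially no obstacle here, since the real work was already carried out in establishing (\ref{bxy}) and Lemma \ref{disu}; the only thing worth checking for rigor is that the standard basis $\mathfrak{B}$ is genuinely orthonormal up to the common scale $8(n+2)$, which is precisely the orthogonality content of Lemma \ref{disu}. One might add a remark that the inner product (\ref{innprd}) is positive definite (it equals $B$ on $\mathfrak{p}$, $-B$ on $\mathfrak{k}$, and makes $\mathfrak{k} \perp \mathfrak{p}$), which is consistent with the fact that all diagonal entries $8(n+2)$ are strictly positive; this confirms that $g$ is a bona fide Riemannian metric. Hence the diagonal matrix in (\ref{cmetric}) is both the correct and the expected answer, and the proof reduces to citing Lemma \ref{disu}.
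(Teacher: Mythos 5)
Your proposal is correct and matches the paper's (implicit) argument exactly: the corollary is stated as an immediate consequence of Lemma \ref{disu}, whose orthogonality and normalization content is precisely the Gram matrix $8(n+2)\,I_{2n^2+5n+3}$. Nothing further is needed.
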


\begin{defi}\label{qmetric}
Let $g$ be the canonical metric on ${\rm Sp}(n,1)$. The metric $\widetilde{g}$ on $ {\rm Sp}(n,1)$  is defined by \begin{equation}\label{scmetric} \widetilde{g}=\frac{1}{2(n+2)}g.\end{equation}
\end{defi}
We will show in Section \ref{volume} that   the metric $\widetilde{g}$  on ${\rm Sp}(n,1)$  induces holomorphic sectional curvature $-1$ on the quotient $ {\rm Sp}(n,1)/{\rm Sp}(n)\times {\rm Sp}(1).$

The canonical metric $g$  on a Lie algebra $\mathfrak{g}$ induces a norm given by $$ \|X\|=\langle X,X\rangle^\frac{1}{2}.$$
Let  \begin{equation}\label{nadx} N({\rm ad}X)=\sup\{\|{\rm ad}X(Y)\|:\  Y\in\mathfrak{g},\|Y\|=1\},\end{equation}
$$C_1=\sup\{N({\rm ad}X):\ X\in\mathfrak{p},\|X\|=1\}$$
and $$ C_2=\sup\{N({\rm ad}U): \ U\in\mathfrak{k},\|U\|=1\}.$$
The appendix to \cite{wan69} includes a table of the constants $C_1$ and $C_2$ for noncompact and nonexceptional Lie groups. The values for ${\rm Sp}(n,1)$ are $$ C_1=\frac{1}{\sqrt{2(n+2)}},\ C_2=\sqrt{2}C_1. $$
With respect to the scaled canonical metric  $\widetilde{g}$, we have \begin{equation}\label{c1c2}C_1=1,\  C_2=\sqrt{2}. \end{equation}

 \section{The Sectional Curvature  of $ {\rm Sp}(n,1)$}\label{sectcur}
  This section aims to obtain the bound of the sectional curvatures of
 ${\rm Sp}(n,1)$  with respect to the scaled  canonical metric.

\subsection{The connection  and curvature}
  By the fundamental theorem of Riemannian geometry,
  a connection   $\nabla $ on the tangent bundle of a manifold  can be expressed in terms of a left invariant metric  $ \langle , \rangle$  by the $ Koszul formula $.  For any left invariant vector fields $ X,Y,Z$,  we have
 \begin{equation}\label{nxy} \langle\nabla_X Y,Z\rangle=\frac{1}{2}\{\langle[X,Y],Z\rangle-\langle Y,[X,Z]\rangle-\langle X,[Y,Z]\rangle\}.\end{equation}
The {\it curvature tensor}   of a connection $\nabla$ is defined by
\begin{equation}\label{rxy}R(X,Y)Z=\nabla_X\nabla_YZ-\nabla_Y\nabla_XZ-\nabla_{[X,Y]}Z.\end{equation}
We mention that a connection is torsion free
\begin{equation}\label{torsion} \nabla_X Y-\nabla_Y X=[X,Y].\end{equation}

When a Lie group G is semisimple and compact, the canonical metric is the negative of the Killing
form and induces a biinvariant metric on G. The connection and curvature can be described in terms of the Lie bracket in a simple way \cite[Proposition 12 in Chapter 4]{Peter}.

When $G$ is semisimple and noncompact, a canonical metric is biinvariant only when restricted to
$K$, the maximal compact subgroup of G with Lie algebra $\mathfrak{k}$.  Adeboye and Wei have derived similar formulae  for the connection and curvature  for this case in  \cite[Proposition 3.3]{abe12} and   \cite[Proposition 2.7]{abe14} .

 We mention that in order to obtain better estimate, we will use some formulae of the connection  and curvature which are slight different from those in \cite{abe14}.  Those formulae  can be easily derived  by  the  properties of (\ref{skewb}),(\ref{nxy})-(\ref{torsion}) and the Jacobi identity.   For the convenience of the readers, we recall them as the following two propositions.

\begin{pro}{\rm  (\cite[Lemma 3.2]{abe12})}
Let $ U,V,W\in\mathfrak{k} $ and $ X,Y,Z\in\mathfrak{p}$. Then we have  the following equalities:
\begin{equation}\label{}\nabla_U V=\frac{1}{2}[U,V], \ \nabla_U X=\frac{3}{2}[U,X];\end{equation}
\begin{equation}\label{}\nabla_X Y=\frac{1}{2}[X,Y], \  \nabla_X U=-\frac{1}{2}[X,U].\end{equation}
\end{pro}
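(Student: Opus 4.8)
The plan is to compute each covariant derivative directly from the Koszul formula (\ref{nxy}), exploiting that the inner product (\ref{innprd}) is positive definite: an element of $\mathfrak{sp}(n,1)$ is completely determined by its pairings against all of $\mathfrak{k}$ and all of $\mathfrak{p}$. So for each assertion I would fix the left-hand side $\nabla_A B$ and test it separately against an arbitrary $W\in\mathfrak{k}$ and an arbitrary $Z\in\mathfrak{p}$, showing that the Koszul bracket expression collapses to $\tfrac{c}{2}\langle[A,B],\cdot\rangle$ with the claimed constant $c$ on one block and to $0$ on the other. The two ingredients that drive every collapse are the grading relations (\ref{kprelation}), which fix which block each bracket lands in, and the invariance identity (\ref{skewb}) for the Killing form, which lets me trade $\langle B,[A,\cdot]\rangle$-type terms for multiples of $\langle[A,B],\cdot\rangle$.

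First I would treat $\nabla_U V$ with $U,V\in\mathfrak{k}$. Testing against $Z\in\mathfrak{p}$ and using (\ref{kprelation}) (so $[U,V]\in\mathfrak{k}$ while $[U,Z],[V,Z]\in\mathfrak{p}$), each of the three Koszul terms pairs a $\mathfrak{k}$-vector with a $\mathfrak{p}$-vector and hence vanishes by the ``otherwise'' clause of (\ref{innprd}); thus $\nabla_U V\in\mathfrak{k}$. Testing against $W\in\mathfrak{k}$, all three terms become $-B$ pairings on $\mathfrak{k}$; applying (\ref{skewb}) to rewrite $\langle V,[U,W]\rangle$ and $\langle U,[V,W]\rangle$ in terms of $B([U,V],W)$, the three contributions combine to $\tfrac{1}{2}\langle[U,V],W\rangle$, giving $\nabla_U V=\tfrac{1}{2}[U,V]$. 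The computation for $\nabla_X Y$ with $X,Y\in\mathfrak{p}$ runs entirely parallel: now $[X,Y]\in\mathfrak{k}$, the $\mathfrak{p}$-tests vanish, and the $\mathfrak{k}$-tests collapse to $\tfrac{1}{2}\langle[X,Y],W\rangle$, yielding $\nabla_X Y=\tfrac{1}{2}[X,Y]$.

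The interesting constant appears in $\nabla_U X$ with $U\in\mathfrak{k}$, $X\in\mathfrak{p}$, where $[U,X]\in\mathfrak{p}$. Testing against $W\in\mathfrak{k}$ again annihilates every term, so $\nabla_U X\in\mathfrak{p}$. Testing against $Z\in\mathfrak{p}$, the first term $\langle[U,X],Z\rangle$ and the second term $-\langle X,[U,Z]\rangle$ are both $+B$ pairings on $\mathfrak{p}$, whereas the third term $-\langle U,[X,Z]\rangle$ lives on $\mathfrak{k}$ and therefore carries the extra sign prescribed by (\ref{innprd}); after rewriting the second and third terms via (\ref{skewb}) they reinforce rather than cancel, producing $3B([U,X],Z)$ inside the braces and hence $\nabla_U X=\tfrac{3}{2}[U,X]$. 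This sign bookkeeping---the fact that $-B$ on $\mathfrak{k}$ versus $+B$ on $\mathfrak{p}$ in (\ref{innprd}) converts a cancellation into an addition---is the one place where care is essential, and it is the main obstacle to a clean write-up. Finally, $\nabla_X U$ follows immediately from torsion-freeness (\ref{torsion}): $\nabla_X U=\nabla_U X+[X,U]=\tfrac{3}{2}[U,X]+[X,U]=-\tfrac{1}{2}[X,U]$, which completes the proof.
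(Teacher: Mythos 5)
Your proof is correct and uses exactly the ingredients the paper points to for this proposition (the Koszul formula (\ref{nxy}), the grading (\ref{kprelation}), the invariance (\ref{skewb}) of the Killing form with its sign flip between the $\mathfrak{k}$- and $\mathfrak{p}$-blocks of (\ref{innprd}), and torsion-freeness for $\nabla_XU$); the paper itself omits the derivation and simply cites \cite[Lemma 3.2]{abe12}. The constants, including the factor $\tfrac{3}{2}$ coming from the three reinforcing $+B([U,X],Z)$ terms, all check out.
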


\begin{pro}\label{pro2.7}{\rm  (cf. \cite[Proposition 2.7]{abe14})}
Let $ U,V,W\in\mathfrak{k} $ and $ X,Y,Z\in\mathfrak{p}$. Then we have   the following equalities:
\begin{equation}\label{uvw}R(U,V)W=\frac{1}{4}[[V,U],W], \end{equation}
\begin{equation}\label{xyz}R(X,Y)Z=-\frac{7}{4}[[X,Y],Z], \end{equation}
\begin{equation}\label{xvy}R(X,V)Y=\frac{1}{4}[X,[V,Y]]+\frac{1}{4}[V,[X,Y]], \end{equation}
\begin{equation}\label{xvv}R(X,V)V=\frac{1}{4}[V,[X,V]], \end{equation}
\begin{equation}\label{xyv}R(X,Y)V=\frac{3}{4}[V,[X,Y]]. \end{equation}
 In  particular
\begin{equation}\label{uvwx}\langle R(U,V)W,X\rangle=0, \end{equation}
\begin{equation}\label{xyzu}\langle R(X,Y)Z,U\rangle=0, \end{equation}
\begin{equation}\label{uvvu}\langle R(U,V)V,U\rangle=\frac{1}{4}\|[U,V]\|^2, \end{equation}
\begin{equation}\label{xyyx}\langle R(X,Y)Y,X\rangle=-\frac{7}{4}\|[X,Y]\|^2, \end{equation}
\begin{equation}\label{uxxu}\langle R(U,X)X,U\rangle=\frac{1}{4}\|[U,X]\|^2. \end{equation}
\end{pro}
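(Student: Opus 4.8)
The plan is to derive each of the five curvature identities (\ref{uvw})--(\ref{xyv}) by substituting the connection formulas of the preceding proposition directly into the definition (\ref{rxy}) of the curvature tensor, and then to collapse the resulting nested brackets with the Jacobi identity. The single bookkeeping device that makes this mechanical is the block structure (\ref{kprelation}): since $[\mathfrak{k},\mathfrak{k}]\subset\mathfrak{k}$, $[\mathfrak{k},\mathfrak{p}]\subset\mathfrak{p}$ and $[\mathfrak{p},\mathfrak{p}]\subset\mathfrak{k}$, every intermediate bracket lies in a definite summand, and this dictates which connection coefficient ($\tfrac12$, $\tfrac32$ or $-\tfrac12$) to apply at the next step. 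For instance, to prove (\ref{xyz}) I would expand $R(X,Y)Z=\nabla_X\nabla_Y Z-\nabla_Y\nabla_X Z-\nabla_{[X,Y]}Z$; here $[Y,Z],[X,Z]\in\mathfrak{k}$ and $[X,Y]\in\mathfrak{k}$, so applying $\nabla_X U=-\tfrac12[X,U]$ to the first two terms and $\nabla_U Z=\tfrac32[U,Z]$ to the last gives $-\tfrac14[X,[Y,Z]]+\tfrac14[Y,[X,Z]]-\tfrac32[[X,Y],Z]$, and the Jacobi identity $[X,[Y,Z]]-[Y,[X,Z]]=[[X,Y],Z]$ collapses this to $-\tfrac74[[X,Y],Z]$. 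The identities (\ref{uvw}), (\ref{xvy}), (\ref{xvv}) and (\ref{xyv}) follow from the same three-term expansion; I would treat (\ref{xvv}) separately since its third slot lies in $\mathfrak{k}$ rather than $\mathfrak{p}$, and I would obtain the curvature entering (\ref{uxxu}) from the antisymmetry $R(U,X)=-R(X,U)$ together with (\ref{xvy}).

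For the ``in particular'' statements I would split into the vanishing identities and the norm identities. The vanishing identities (\ref{uvwx}) and (\ref{xyzu}) are immediate once the first part is in hand: by (\ref{kprelation}) the vector $R(U,V)W=\tfrac14[[V,U],W]$ lies in $\mathfrak{k}$ while $X\in\mathfrak{p}$, and $R(X,Y)Z=-\tfrac74[[X,Y],Z]$ lies in $\mathfrak{p}$ while $U\in\mathfrak{k}$, so both pairings vanish by the orthogonality built into (\ref{innprd}). The three norm identities (\ref{uvvu}), (\ref{xyyx}) and (\ref{uxxu}) all reduce, after specializing the curvature formulas (for example $R(U,V)V=\tfrac14[[V,U],V]$), to an algebraic claim of the shape $\langle[[B,A],B],A\rangle=\pm\|[A,B]\|^2$. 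I would establish this from the ad-invariance (\ref{skewb}) of the Killing form: rewriting $[[A,B],B]=-{\rm ad}_B([A,B])$ and using that ${\rm ad}_B$ is skew with respect to $B(\cdot,\cdot)$ to move one bracket across produces a multiple of $B([A,B],[A,B])$. The overall sign is then pinned down by (\ref{innprd}), which assigns $\langle\cdot,\cdot\rangle=-B$ on $\mathfrak{k}$ but $+B$ on $\mathfrak{p}$, applied both to the outer pairing and to $\|[A,B]\|^2$ according to the block in which each vector lies.

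The computations themselves are short; the genuine obstacle is the sign bookkeeping, and it is entirely a consequence of the noncompactness of ${\rm Sp}(n,1)$. Because the canonical metric is bi-invariant only on $K$, the connection coefficients in the $\mathfrak{p}$-directions are not symmetric (the $\tfrac32$ and $-\tfrac12$ replace the uniform $\tfrac12$ of the compact case), while the inner product (\ref{innprd}) carries opposite signs on $\mathfrak{k}$ and $\mathfrak{p}$. Consequently the crux at every step is to determine correctly, via (\ref{kprelation}), in which summand each iterated bracket lands, so that the right connection coefficient and the right sign of $B$ are used; this is exactly what produces the asymmetric curvature coefficients $-\tfrac74$ and $\tfrac34$ that distinguish this noncompact computation from the bi-invariant one. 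I expect no conceptual difficulty beyond keeping these signs consistent throughout.
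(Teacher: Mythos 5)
Your proposal is correct and follows essentially the same route as the paper: expand $R$ via the definition (\ref{rxy}) using the connection formulas, track which summand of $\mathfrak{k}\oplus\mathfrak{p}$ each bracket lands in via (\ref{kprelation}) to select the right coefficient, collapse with the Jacobi identity, and use ad-invariance (\ref{skewb}) together with the sign conventions of (\ref{innprd}) for the norm identities. The only difference is that the paper proves just (\ref{xvy})--(\ref{xyv}) explicitly and cites \cite{abe14} for the rest, whereas you rederive everything; your computations (e.g.\ the $-\tfrac14-\tfrac32=-\tfrac74$ count for (\ref{xyz})) check out.
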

\begin{proof} Comparing  with \cite[Proposition 2.7]{abe14}), we only need to prove (\ref{xvy})-(\ref{xyv}).
Note that $$-\frac{1}{2}[X,[V,Y]]-\frac{1}{2}[V,[Y,X]]-\frac{1}{2}[Y,[X,V]]=0.$$  We obtain that
\begin{eqnarray*}
R(X,V)Y&=& \nabla_X\nabla_VY-\nabla_V\nabla_XY-\nabla_{[X,V]}Y\\
&=& \frac{3}{4}[X,[V,Y]]-\frac{1}{4}[V,[X,Y]]+\frac{1}{2}[Y,[X,V]]\\
&=& \frac{1}{4}[X,[V,Y]]+\frac{1}{4}[V,[X,Y]].
\end{eqnarray*}
Similarly we have
\begin{eqnarray*}
R(X,V)V&=&\nabla_X\nabla_VV-\nabla_V\nabla_XV
-\nabla_{[X,V]}V\\
&=& \frac{3}{4}[V,[X,V]]+\frac{1}{2}[[X,V],V]\\
&=& \frac{1}{4}[V,[X,V]]
\end{eqnarray*}
and
\begin{eqnarray*}
R(X,Y)V&=&\nabla_X\nabla_YV-\nabla_Y\nabla_XV
-\nabla_{[X,Y]}V\\
&=& -\frac{1}{4}[X,[Y,V]]+\frac{1}{4}[Y,[X,V]]-\frac{1}{2}[[X,Y],V]\\
&=& \frac{3}{4}[V,[X,Y]].
\end{eqnarray*}

\end{proof}

\begin{defi}
The  sectional curvature  of the planes  spanned by  $ X,Y\in\mathfrak{g} $ is denoted and defined by
\begin{equation}\label{dfnsc}K(X,Y)=\frac{\langle R(X,Y)Y,X\rangle}{\|X\|^2\|Y\|^2-\langle X,Y\rangle^2}.\end{equation}
\end{defi}

\begin{pro}\label{basecb}
 The sectional  curvature  of  ${\rm Sp}(n,1)$ with  respect  to  the  metric  $\widetilde{g}$  at  the  planes  spanned  by standard  basis elements is bounded  above by  $\frac{1}{2}.$
\end{pro}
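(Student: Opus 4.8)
The plan is to reduce the statement to a three-way case analysis dictated by the Cartan decomposition $\mathfrak{sp}(n,1)=\mathfrak{k}\oplus\mathfrak{p}$, exploiting the fact that every element of the standard basis $\mathfrak{B}$ lies \emph{entirely} in $\mathfrak{k}$ or entirely in $\mathfrak{p}$. First I would record two normalizations. By Lemma \ref{disu} together with Definition \ref{qmetric}, distinct basis elements are $\widetilde{g}$-orthogonal and each has $\widetilde{g}$-norm $2$, since $\langle X,X\rangle_{\widetilde g}=\frac{1}{2(n+2)}\cdot 8(n+2)=4$. Hence for a plane spanned by two distinct basis vectors the denominator $\|X\|^2\|Y\|^2-\langle X,Y\rangle^2$ in (\ref{dfnsc}) collapses to $\|X\|^2\|Y\|^2=16$. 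Now if the plane is spanned by $X,Y\in\mathfrak{p}$, then (\ref{xyyx}) gives $\langle R(X,Y)Y,X\rangle=-\tfrac74\|[X,Y]\|^2\le 0$, so $K(X,Y)\le 0\le\tfrac12$ and this case is immediate.

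The two remaining cases, where at least one spanning vector lies in $\mathfrak{k}$, carry the positive numerators $\langle R(U,V)V,U\rangle=\tfrac14\|[U,V]\|^2$ from (\ref{uvvu}) and $\langle R(U,X)X,U\rangle=\tfrac14\|[U,X]\|^2$ from (\ref{uxxu}), so here I would bound the bracket norm through the operator-norm constants. The key observation is the homogeneity $N(\mathrm{ad}(cU))=|c|\,N(\mathrm{ad}\,U)$: writing a norm-$2$ basis vector $U\in\mathfrak{B}\cap\mathfrak{k}$ as $U=2\hat U$ with $\|\hat U\|_{\widetilde g}=1$ and invoking (\ref{c1c2}) yields $N(\mathrm{ad}\,U)\le 2C_2=2\sqrt2$. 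For the $\mathfrak{k}$–$\mathfrak{k}$ case this gives $\|[U,V]\|\le N(\mathrm{ad}\,U)\,\|V\|\le 2\sqrt2\cdot 2=4\sqrt2$, so $\|[U,V]\|^2\le 32$ and
$$K(U,V)=\frac{\tfrac14\|[U,V]\|^2}{16}\le\frac{8}{16}=\frac12.$$
The mixed case is identical with $\|[U,X]\|\le N(\mathrm{ad}\,U)\,\|X\|\le 4\sqrt2$, again giving $K(U,X)\le\tfrac12$; in fact, estimating instead via $[U,X]=-\mathrm{ad}\,X(U)$ and $N(\mathrm{ad}\,X)\le 2C_1=2$ for $X\in\mathfrak{p}$ produces the sharper value $\tfrac14$, but $\tfrac12$ is all that is needed.

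The one step I would verify most carefully is precisely this scaling bookkeeping. Since $N(\mathrm{ad}\,\cdot)$ is an operator norm it is invariant under a uniform rescaling of the metric but homogeneous of degree one in its argument; it is this homogeneity that upgrades the unit-vector value $C_2=\sqrt2$ to the factor $2\sqrt2$ appropriate for the norm-$2$ basis elements, and the two factors of $2$ appearing in $\|[U,V]\|\le 2\sqrt2\cdot 2$ are exactly what render the final quotient equal to $\tfrac12$ rather than something larger. Assembling the three cases then establishes that $K\le\tfrac12$ at every plane spanned by standard basis elements, as claimed.
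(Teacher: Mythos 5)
Your proof is correct, but it takes a genuinely different route from the paper's. The paper's own argument singles out the extremal planes by inspecting the bracket table of Proposition \ref{baseprd}: it asserts that the curvature is maximized on the planes spanned by pairs among $\sqrt{2}{\bf i}e_{ii},\sqrt{2}{\bf j}e_{ii},\sqrt{2}{\bf k}e_{ii}$ and then computes $K(\sqrt{2}{\bf i}e_{ii},\sqrt{2}{\bf j}e_{ii})=\frac{1}{4}\cdot\frac{\|4{\bf k}e_{ii}\|^{2}}{16}=\frac{1}{2}$ exactly, which in particular shows the bound $\frac12$ is attained and hence sharp for basis planes. You instead run the Cartan-type case analysis ($\mathfrak{k}$--$\mathfrak{k}$, $\mathfrak{k}$--$\mathfrak{p}$, $\mathfrak{p}$--$\mathfrak{p}$) and control the bracket norms uniformly via the Wang constants $C_1=1$, $C_2=\sqrt{2}$ of (\ref{c1c2}) together with the degree-one homogeneity of $N(\mathrm{ad}\,\cdot)$; your scaling bookkeeping ($\|X\|_{\widetilde g}=2$ for $X\in\mathfrak{B}$, denominator $16$, $\|[U,V]\|^{2}\le 32$) is accurate, and your observation that the mixed case actually yields $\frac14$ and the $\mathfrak{p}$--$\mathfrak{p}$ case is nonpositive is a nice refinement. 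In effect you have imported the method the paper only deploys later, in Proposition \ref{secub}, for arbitrary planes. What each approach buys: the paper's direct computation certifies sharpness and pins down where the maximum occurs, which your operator-norm estimate alone does not; your argument avoids the unproved (though checkable) claim that those particular planes are the maximizers, and is shorter and more robust since it never needs the full bracket table. Both are valid proofs of the stated upper bound.
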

\begin{proof}
Since the basis elements are mutually orthogonal, the sectional curvature at the plane spanned by any distinct elements
$ X,Y\in\mathfrak{B} $ is given by $$ K(X,Y)=\frac{\langle R(X,Y)Y,X\rangle}{\|X\|^2\|Y\|^2}.$$
By (\ref{uvvu})-(\ref{uxxu}) and Proposition \ref{baseprd}, the largest sectional curvature spanned by basis directions are the planes spanned by $ {\bf i}\sqrt{2}e_{ii},{\bf j}\sqrt{2}e_{ii};{\bf i}\sqrt{2}e_{ii},{\bf k}\sqrt{2}e_{ii}$ or ${\bf j}\sqrt{2}e_{ii},{\bf k}\sqrt{2}e_{ii}.$  The largest sectional curvature is given by
\begin{equation} K({\bf i}\sqrt{2}e_{ii},{\bf j}\sqrt{2}e_{ii})=\frac{\frac{1}{4}\|[{\bf i}\sqrt{2}e_{ii},{\bf j}\sqrt{2}e_{ii}]\|^2}{\|{\bf i}\sqrt{2}e_{ii}\|^2\|{\bf j}\sqrt{2}e_{ii}\|^2}
=\frac{1}{4}\frac{\|4{\bf k}e_{ii}\|^2}{16}=\frac{1}{2}. \end{equation}
\end{proof}

 Let $$R(X_1,X_2,X_3,X_4):=\langle  R(X_1,X_2)X_3,X_4 \rangle.$$  We recall the following facts \cite[Page 33]{Peter}  for any left invariant vector fields $X_1,X_2,X_3,X_4$:
\begin{equation}\label{proc1}R(X_1,X_2,X_3,X_4)=-R(X_2,X_1,X_3,X_4)=R(X_2,X_1,X_4,X_3),\end{equation}
\begin{equation}\label{proc2}R(X_1,X_2,X_3,X_4)=R(X_3,X_4,X_1,X_2).\end{equation}

\begin{pro}\label{secub}
 The  sectional  curvatures  of  ${\rm Sp}(n,1)$ with  respect  to  $\widetilde{g}$ are  bounded  above by
 $\frac{3+4\sqrt{2}}{2}.$
\end{pro}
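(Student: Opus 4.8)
The plan is to bound an arbitrary sectional curvature $K(X,Y)$ for unit vectors $X,Y$ by reducing to the already-established curvature quantities on the basis $\mathfrak{B}$. First I would decompose each of the two vectors spanning the plane into its $\mathfrak{k}$ and $\mathfrak{p}$ components, writing $X = U_1 + X_1$ and $Y = U_2 + X_2$ with $U_1,U_2\in\mathfrak{k}$ and $X_1,X_2\in\mathfrak{p}$. Because we may assume $X,Y$ are orthonormal without loss of generality (the sectional curvature depends only on the plane they span, and the denominator in (\ref{dfnsc}) normalizes), the numerator becomes $\langle R(X,Y)Y,X\rangle$, which I would expand multilinearly using the bilinearity of the curvature tensor in each slot.

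The key step is to expand $R(X,Y,Y,X) = \langle R(U_1+X_1, U_2+X_2)(U_2+X_2), U_1+X_1\rangle$ into its constituent terms and sort them by type. Using the symmetries (\ref{proc1})--(\ref{proc2}) together with the vanishing relations (\ref{uvwx}) and (\ref{xyzu}) from Proposition \ref{pro2.7}, many mixed terms drop out, and the surviving terms are governed by the explicit formulae (\ref{uvw})--(\ref{xyv}) and the norm identities (\ref{uvvu})--(\ref{uxxu}). The three pure or mixed curvature contributions carry the coefficients $\tfrac14$, $-\tfrac74$, and the cross terms from $R(X,V)Y$-type expressions with coefficient $\tfrac14$. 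I would collect these, and then estimate each term's norm using the operator bounds $C_1=1$ and $C_2=\sqrt2$ from (\ref{c1c2}): the brackets $\|[U,V]\|$, $\|[U,X]\|$, and $\|[X,Y]\|$ are controlled by $C_2\|U\|\|V\|$, $C_2\|U\|\|X\|$ (or $C_1$ where the $\mathfrak{p}$-bound applies), and the corresponding products of the component norms. The constant $\tfrac{3+4\sqrt2}{2}$ should emerge as the worst-case combination: the $\tfrac74$ from (\ref{xyyx}) contributes a $\tfrac74$ times a factor involving $C_1^2=1$, while the mixed and cross terms contribute the $\sqrt2$-flavored pieces through $C_2$, and maximizing the resulting quadratic form over the normalization constraint $\|X\|^2=\|Y\|^2=1$, $\langle X,Y\rangle=0$ yields the stated bound.

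I expect the main obstacle to be the bookkeeping of the cross terms of the form $\langle R(X_1,U_2)X_2, U_1\rangle$ and their symmetric partners, which do not vanish and must be bounded by Cauchy--Schwarz combined with the bracket estimates; unlike the diagonal terms these involve genuinely mixed $\mathfrak{k}$-$\mathfrak{p}$ brackets whose norms require the $C_2$ bound and whose signs must be tracked carefully so that the upper bound is not lost to cancellation that cannot be guaranteed. The delicate point is organizing the expansion so that the $-\tfrac74$ term (which is negative and hence helpful for an upper bound) and the positive cross terms are combined into a single quadratic expression in the four component norms, and then verifying that its maximum under the unit-norm and orthogonality constraints is exactly $\tfrac{3+4\sqrt2}{2}$ rather than something larger. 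I would finish by exhibiting, or at least identifying, the extremal configuration of components that realizes this constant, thereby confirming the bound is the one claimed.
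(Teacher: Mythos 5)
Your proposal follows essentially the same route as the paper: decompose the two spanning vectors into $\mathfrak{k}$- and $\mathfrak{p}$-components, expand $\langle R(X+U,Y+V)(Y+V),X+U\rangle$ using the symmetries (\ref{proc1})--(\ref{proc2}) and the vanishing relations (\ref{uvwx})--(\ref{xyzu}), and bound each surviving term through the curvature formulae of Proposition \ref{pro2.7} and the operator norms $C_1=1$, $C_2=\sqrt{2}$. The only divergence is at the final step: the paper does not maximize a quadratic form or exhibit an extremal configuration --- it simply discards the negative term $-\frac{7}{4}\|[X,Y]\|^2$ and adds the individual worst-case bounds $\frac{1}{2}+2\cdot\frac{1}{4}+2\cdot\frac{1+\sqrt{2}}{4}+2\cdot\frac{3\sqrt{2}}{4}=\frac{3+4\sqrt{2}}{2}$ (which cannot all be attained simultaneously, since for instance the first requires $\|U\|=\|V\|=1$ and hence $X=Y=0$), so the constant is a crude sum rather than a sharp extremum and the delicate optimization you anticipate is not needed.
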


\begin{proof}
Let $ U,V \in \mathfrak{k}$ and  $X,Y \in \mathfrak{p}$.  Then by the above properties  (\ref{proc1}),(\ref{proc2}) and (\ref{uvwx}),(\ref{xyzu}) we can reduce the  sixteen items of the expansion  of   $\langle R(X+U,Y+V)(Y+V),X+U \rangle $ to eight items. That is
\begin{eqnarray*} \langle R(X+U,Y+V)(Y+V),X+U \rangle &=& \langle R(X,Y)Y,X \rangle +\langle R(U,V)V,U \rangle +\langle  R(U,Y)Y,U \rangle \\
&+&\langle R(X,V)V,X \rangle +2 \langle R(X,Y)V,U \rangle +2 \langle R(X,V)Y,U \rangle.
\end{eqnarray*}
Assume that $ \|U+X \|=1,\|V+Y\|=1$ and $\langle U+X,V+Y\rangle=0.$
Let $c_V,c_U,c_X,c_Y$ be real numbers  satisfying
\begin{equation}\label{coef}\|c_VV\|=\|c_UU\|=\|c_XX\|=\|c_YY\|=1.\end{equation}
 By our assumption, it obvious that $$|c_V|,|c_U|,|c_X|,|c_Y|\geq 1.$$
It follows from (\ref{nadx}) that $$\|{\rm ad}V(U)\|=\|[V,U]\|\leq \|[c_VV,c_U U]\|=\|{\rm ad}c_VV(c_UU)\|\leq C_2.$$
Similarly we have  $$\|{\rm ad}V(X)\|\leq C_2, \|{\rm ad}X(U)\|\leq C_1,  \|{\rm ad}X(Y)\|\leq C_1.$$
Therefore  by (\ref{uvvu}), (\ref{uxxu}), (\ref{xvv})  and  (\ref{c1c2})  we have
$$  \langle R(U,V)V,U \rangle =\frac{1}{4} \|{\rm ad}V(U)\|^2 \leq \frac{1}{4}C_2^2=\frac{1}{2},$$
$$  \langle R(U,Y)Y,U \rangle =\frac{1}{4} \|{\rm ad}Y(U)\|^2 \leq \frac{1}{4}C_1^2=\frac{1}{4}$$
and
$$\langle R(X,V)V,X \rangle = \langle \frac{1}{4}[V,[X,V]],X \rangle=\frac{1}{4} \|{\rm ad}X(V)\|^2 \leq \frac{1}{4}C_1^2=\frac{1}{4}.$$
By (\ref{xyv})  we have
\begin{eqnarray*}
\langle R(X,Y)V,U\rangle&=&\frac{3}{4}\langle[X,Y],[U,V]\rangle\\
&\leq&  \frac{3}{4}\|[X,Y]\|\,\|[U,V]\|\\
&=& \frac{3}{4}\|{\rm ad}X(Y)\|\|{\rm ad}U(V)\|\\
&\leq& \frac{3C_1C_2}{4}=\frac{3\sqrt{2}}{4}.\end{eqnarray*}
By (\ref{xvy})  we have \begin{eqnarray*}  \langle R(X,V)Y,U\rangle &=&\frac{1}{4}\langle[V,Y],[X,U]\rangle+\frac{1}{4}\langle[X,Y],[U,V]\rangle\\
&\leq&  \frac{1}{4}\Big[\|[X,U]\|\,\|[Y,V]\| + \frac{1}{4}\|[X,Y]\|\,\|[U,V]\|\Big]\\
&=& \frac{1}{4}\Big[\|{\rm ad}X(U)\|\|{\rm ad}Y(V)\| +\frac{1}{4} \|{\rm ad}X(Y)\|\|{\rm ad}U(V)\|\Big]\\
&\leq& \frac{1}{4}(C_1^2+C_1C_2)=\frac{1+\sqrt{2}}{4}.\end{eqnarray*}
Noting that
\begin{eqnarray*}
\langle R(X,Y)Y,X\rangle = -\frac{7}{4}\|[X,Y]\|^2\leq 0,
\end{eqnarray*}
we obtain that the sectional curvatures of  of  ${\rm Sp}(n,1)$ with  respect  to  $\widetilde{g}$ are  bounded  above by
$$\frac{1}{2}+2\cdot\frac{1}{4}+2\cdot\frac{1+\sqrt{2}}{4}+2\cdot\frac{3\sqrt{2}}{4}=\frac{3+4\sqrt{2}}{2}.$$
\end{proof}

\section{The volume of quaternionic hyperbolic orbifolds}\label{volume}

This section  contains the proof of Theorem \ref{mainthm} with the similar route map employed  in \cite{abe14}. First, we construct a   Riemannian submersion form the  quotient ${\rm Sp}(n,1)/\Gamma$ to the quotient  ${\bf H}_\bh^n/\Gamma$.  With this Riemannian submersion, we can employ Wang's result \cite[Theorem 5.2]{wan69} to produce an inscribed ball of radius $\frac{R_{{\rm Sp}(n,1)}}{2}$ in ${\bf H}_\bh^n/\Gamma$ and obtain the lower bound by a comparison theorem of Gunther \cite[Theorem 3.101]{gal90}.

\subsection{Riemannian Submersions}

 \begin{defi}Let $ (M,g) \ and \ (N,h)$ be Riemannian manifolds and $ q:M\rightarrow N$ a surjective submersion. For each
point $x\in M$,  the tangent space $T_xM$  can be decomposed into the orthogonal direct sum
$$T_xM=({\rm Ker}\  dq)^\perp_x+({\rm Ker}\  dq)_x.$$
 The map $q$ is said to be a  Riemannian submersion if $$ g(X,Y)=h(dqX,dqY),  \forall X,Y\in ({\rm Ker}\  dq)^\perp_x \ for \ some  \ x\in M. $$
 \end{defi}

Let  $ X,Y $ be orthonormal vector fields on $ N $ and let $\widetilde{X},\widetilde{Y}$ be their horizontal lifts to $ M .$   O'Neill's formula\cite[Page 127]{gal90} relates the sectional curvature of the base space of a Riemannian submersion with that of the total space
\begin{equation}\label{neformula}K_b(X,Y)=K_t(\widetilde{X},\widetilde{Y})+\frac{3}{4}\|[\widetilde{X},\widetilde{Y}]^\perp\|^2, \end{equation}
where $Z^\perp $ represents the vertical component of $Z$.

\begin{defi}
 Let $\bJ$ be the complex structure on $\mathfrak{p}$ such that
\begin{equation}\label{qcmstr}\bJ X= \sum_{j=1}^{n}(a_{2j}\beta_{j,n+1}-a_{1j}{\bf i}\alpha_{j,n+1}-a_{4j}{\bf j}\alpha_{j,n+1}+a_{3j}{\bf j}\alpha_{j,n+1}), \end{equation}
for $X=\sum_{j=1}^{n}(a_{1j}\beta_{j,n+1}+a_{2j}{\bf i}\alpha_{j,n+1}+a_{3j}{\bf j}\alpha_{j,n+1}+a_{4j}{\bf k}\alpha_{j,n+1})\in \mathfrak{p}$.
\end{defi}
We remind that
 $$B(\bJ X,\bJ Y)=B(X,Y).$$   This implies that the complex structure preserves the Killing form $B(X,Y)$.

\begin{pro}\label{holxy}
Let\begin{equation}\label{holx} X= \sum_{j=1}^{n}(a_{1j}\beta_{j,n+1}+a_{2j}{\bf i}\alpha_{j,n+1}+a_{3j}{\bf j}\alpha_{j,n+1}+a_{4j}{\bf k}\alpha_{j,n+1})\end{equation} and
 \begin{equation}\label{holy} Y= \bJ X= \sum_{k=1}^{n}(a_{2k}\beta_{k,n+1}-a_{1k}{\bf i}\alpha_{k,n+1}-a_{4k}{\bf j}\alpha_{k,n+1}+a_{3k}{\bf k}\alpha_{k,n+1}),\end{equation}
 where
 $$  \sum_{j=1}^{n}(a_{1j}^2+a_{2j}^2+a_{3j}^2+a_{4j}^2)=\frac{1}{4}.$$
  Then
 $$\|[X,Y]\|^2=1.$$
\end{pro}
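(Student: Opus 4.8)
The plan is to compute the commutator $[X,Y]$ explicitly using the bracket relations in Proposition \ref{baseprd}, and then read off its norm via the diagonal metric of Lemma \ref{disu}. Since $X$ and $Y=\bJ X$ both lie in $\mathfrak{p}$, the relations (\ref{kprelation}) guarantee $[X,Y]\in\mathfrak{k}$, so the result will be expressed in the $\mathfrak{k}$-part of the standard basis (the $\alpha_{jk}$, ${\bf I}_t\beta_{jk}$, and ${\bf I}_t e_{ii}$ directions). First I would write $X=\sum_j(a_{1j}\beta_{j,n+1}+a_{2j}{\bf i}\alpha_{j,n+1}+a_{3j}{\bf j}\alpha_{j,n+1}+a_{4j}{\bf k}\alpha_{j,n+1})$ and the analogous expression for $Y$, then expand the bilinear bracket into a double sum over $j,k$. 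The relevant brackets of $\mathfrak{p}$-elements are exactly the last four lines of Proposition \ref{baseprd}: namely $[\beta_{j,n+1},\beta_{k,n+1}]$, $[\beta_{j,n+1},{\bf I}_t\alpha_{k,n+1}]$, $[{\bf I}_t\alpha_{j,n+1},{\bf I}_t\alpha_{k,n+1}]$, and $[{\bf I}_t\alpha_{j,n+1},{\bf I}_s\alpha_{k,n+1}]$ for $t\neq s$.

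The key organizational step is to separate the diagonal terms ($j=k$) from the off-diagonal terms ($j\neq k$). For $j=k$ the $\alpha_{jk}$ and $\beta_{jk}$ outputs vanish (since $\alpha_{jj}=0$, $\beta_{jj}=2e_{jj}$ is absorbed differently), so the diagonal contributions collapse into the $e_{n+1,n+1}$ and ${\bf I}_t e_{ii}$ directions coming from the $2\delta_{jk}e_{n+1,n+1}$ terms in the last four relations. I expect these diagonal terms, after substituting the specific coefficient pattern of $\bJ X$ (the sign-permuted coefficients $a_{2k},-a_{1k},-a_{4k},a_{3k}$), to produce a clean multiple of $\sum_j(a_{1j}^2+a_{2j}^2+a_{3j}^2+a_{4j}^2)$ in a single direction. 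For the off-diagonal terms, the point is that the antisymmetry $j\leftrightarrow k$ combined with the sign structure of the complex structure $\bJ$ should cause cancellation: in $[X,\bJ X]$ the off-diagonal $\alpha_{jk}$ and ${\bf I}_t\beta_{jk}$ coefficients should vanish identically, leaving only the diagonal $e_{n+1,n+1}$-type contribution. This is the decisive mechanism that makes the answer depend only on the normalization constant.

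The main obstacle will be the careful bookkeeping of the quaternionic cross-terms $[{\bf I}_t\alpha_{j,n+1},{\bf I}_s\alpha_{k,n+1}]$ with $t\neq s$, where the products ${\bf I}_t{\bf I}_s$ introduce the remaining imaginary unit and the sign depends on the ordering of ${\bf i},{\bf j},{\bf k}$. Tracking these signs correctly through the permutation $(a_{1j},a_{2j},a_{3j},a_{4j})\mapsto(a_{2j},-a_{1j},-a_{4j},a_{3j})$ is where errors are most likely, and it is essential for confirming that the off-diagonal pieces cancel rather than accumulate. Once the commutator is reduced to a single diagonal direction of the form $(\text{const})\cdot e_{n+1,n+1}$ (or an equivalent combination), I would apply Lemma \ref{disu} to compute the norm; each basis element has squared norm $8(n+2)$ under $g$, but under the scaled metric $\widetilde g=\frac{1}{2(n+2)}g$ this becomes $4$, so the final arithmetic must use the $\widetilde g$-normalization. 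Combining the coefficient extracted from the diagonal terms with the constraint $\sum_j(a_{1j}^2+a_{2j}^2+a_{3j}^2+a_{4j}^2)=\tfrac14$ should yield $\|[X,Y]\|^2=1$ exactly.
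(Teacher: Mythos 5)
Your overall strategy (expand $[X,\bJ X]$ via Proposition \ref{baseprd}, note $[X,Y]\in\mathfrak{k}$, and read off the norm from the diagonal metric with $\|e_i\|^2_{\widetilde g}=4$) is the same as the paper's, and your metric normalization is right. However, the mechanism you identify as decisive --- that the off-diagonal $\alpha_{jk}$ and ${\bf I}_t\beta_{jk}$ coefficients ``vanish identically'' by antisymmetry, leaving only an $e_{n+1,n+1}$-type contribution --- is false, and if it were true the proposition itself would be false. Carrying out the expansion, the coefficient of ${\bf i}\beta_{jk}$ for $j\neq k$ is $a_{1j}a_{1k}+a_{2j}a_{2k}-a_{3j}a_{3k}-a_{4j}a_{4k}$ (symmetric in $j\leftrightarrow k$, matching the symmetry of $\beta_{jk}$, hence no cancellation), and the $\alpha_{jk}$ coefficient is antisymmetric, matching $\alpha_{jk}=-\alpha_{kj}$, so it survives as well. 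Quantitatively, the $e_{n+1,n+1}$ term alone contributes only $\tfrac12$ to $\|[X,Y]\|^2$; the other $\tfrac12$ comes precisely from the off-diagonal terms together with the ${\bf I}_te_{jj}$ ($j\le n$) terms you also expected to collapse into a single direction.

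The correct key step, which your proposal is missing, is an Euler four-square (quaternion norm multiplicativity) identity: writing $s_j=a_{1j}^2+a_{2j}^2+a_{3j}^2+a_{4j}^2$, the sum of the squares of the four off-diagonal coefficients attached to the pair $(j,k)$ equals $s_js_k$, and the sum of the squares of the three diagonal ${\bf I}_te_{jj}$ coefficients equals $s_j^2$. These combine as
\begin{equation*}
4\sum_{j<k}s_js_k+2\sum_{j}s_j^2=2\Bigl(\sum_{j}s_j\Bigr)^2=\frac18,
\end{equation*}
which added to the $e_{n+1,n+1}$ contribution of $\tfrac18$ gives $\|[X,Y]\|^2/4=\tfrac14$. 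Without this identity (or an equivalent structural argument), the computation does not close, so as written your plan would not yield the stated result.
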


 \begin{proof}
 It is obvious that $ \|X \|=1, \|Y \|=1$ and $\langle X, Y\rangle=0$.
By Proposition \ref{baseprd} we have
\begin{eqnarray*}
[X,Y]&=& \sum_{j\neq k}\Big\{(a_{1j}a_{2k}-a_{2j}a_{1k}-a_{3j}a_{4k}+a_{4j}a_{3k})\alpha_{jk}\\
&+&(a_{1j}a_{1k}+a_{2j}a_{2k}-a_{3j}a_{3k}-a_{4j}a_{4k})\mathbf{i}\beta_{jk}\\
&+&(a_{1j}a_{4k}+a_{2j}a_{3k}+a_{3j}a_{2k}+a_{4j}a_{1k})\mathbf{j}\beta_{jk}\\
&+&(-a_{1j}a_{3k}+a_{2j}a_{4k}-a_{3j}a_{1k}+a_{4j}a_{2k})\mathbf{k}\beta_{jk}\Big\}\\
&+& \sum_{j=1}^{n}\Big\{\sqrt{2}(a_{1j}^2+a_{2j}^2-a_{3j}^2-a_{4j}^2)\sqrt{2}{\bf i}e_{jj}\\
&+&2\sqrt{2}(a_{1j}a_{4j}+a_{2j}a_{3j})\sqrt{2}{\bf j}e_{jj}\\
&+&2\sqrt{2}(a_{2j}a_{4j}-a_{1j}a_{3j})\sqrt{2}{\bf k}e_{jj}\\
&-&\sqrt{2}(a_{1j}^2+a_{2j}^2+a_{3j}^2+a_{4j}^2)\sqrt{2}{\bf i}e_{n+1,n+1}\Big\}.
\end{eqnarray*}
Hence
\begin{eqnarray*}
\frac{\|[X,Y]\|^2}{4}& =& 4\sum_{j< k}\Big\{(a_{1j}a_{2k}-a_{2j}a_{1k}-a_{3j}a_{4k}+a_{4j}a_{3k})^2\\
&+&(a_{1j}a_{1k}+a_{2j}a_{2k}-a_{3j}a_{3k}-a_{4j}a_{4k})^2\\
&+&(a_{1j}a_{4k}+a_{2j}a_{3k}+a_{3j}a_{2k}+a_{4j}a_{1k})^2\\
&+&(-a_{1j}a_{3k}+a_{2j}a_{4k}-a_{3j}a_{1k}+a_{4j}a_{2k})^2\Big\}\\
&+& \sum_{j=1}^{n}\Big\{2(a_{1j}^2+a_{2j}^2-a_{3j}^2-a_{4j}^2)^2+8(a_{1j}a_{4j}+a_{2j}a_{3j})^2+8(a_{2j}a_{4j}-a_{1j}a_{3j})^2\Big\}+ \frac{1}{8}\\
&=& 4\sum_{j< k}(a_{1j}^2+a_{2j}^2+a_{3j}^2+a_{4j}^2)(a_{1k}^2+a_{2k}^2+a_{3k}^2+a_{4k}^2)+\sum_{j=1}^{n}\Big\{2(a_{1j}^2+a_{2j}^2+a_{3j}^2+a_{4j}^2)^2\Big\}+ \frac{1}{8}\\
&=&2 (\sum_{j=1}^{n}(a_{1j}^2+a_{2j}^2+a_{3j}^2+a_{4j}^2))^2+ \frac{1}{8}=\frac{1}{4}.
\end{eqnarray*}
 \end{proof}

\begin{pro}\label{prokb1}
Consider the quotient map
\begin{equation}\label{quopi}\pi :{\rm Sp}(n,1)\to {\rm Sp}(n,1)/{{\rm Sp}(n)\times {\rm Sp}(1)}.\end{equation}
Then the restriction of the inner product $ \langle X,Y \rangle$,  defined on  $\mathfrak{sp}(n,1)=\mathfrak{k} \oplus \mathfrak{p}$,  to  $$d_e\pi (\mathfrak{p})=T_{\pi(e)}{\rm Sp}(n,1)/{{\rm Sp}(n)\times {\rm Sp}(1)},$$  induces a Riemannian metric on the quotient space.  That is  the map $ \pi $ is a Riemannian submersion.
\end{pro}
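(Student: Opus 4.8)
The plan is to verify that the quotient map $\pi$ is a Riemannian submersion by checking the definition directly. The key observation is that the decomposition $\mathfrak{sp}(n,1)=\mathfrak{k}\oplus\mathfrak{p}$ identifies $\mathfrak{k}$ with the Lie algebra of the isotropy group ${\rm Sp}(n)\times{\rm Sp}(1)$, so that $\mathfrak{k}$ is precisely the kernel of $d_e\pi$ (the vertical space at the identity) and $\mathfrak{p}$ is its orthogonal complement (the horizontal space) with respect to the canonical metric. First I would record that, by the definition of the inner product in \eqref{innprd}, $\mathfrak{k}$ and $\mathfrak{p}$ are mutually orthogonal, so the decomposition $T_e{\rm Sp}(n,1)=\mathfrak{k}\oplus\mathfrak{p}$ is indeed an orthogonal direct sum and $\mathfrak{p}=(\ker d_e\pi)^\perp$.

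Next I would address the well-definedness of the induced metric on the base. Since ${\rm Sp}(n,1)$ acts on the homogeneous space ${\rm Sp}(n,1)/{\rm Sp}(n)\times{\rm Sp}(1)$ by left translations, and the canonical metric $g$ is left invariant, it suffices to define the metric $h$ at the single point $\pi(e)$ by transporting $\langle\cdot,\cdot\rangle|_{\mathfrak{p}}$ through the linear isomorphism $d_e\pi\colon\mathfrak{p}\to T_{\pi(e)}\bigl({\rm Sp}(n,1)/{\rm Sp}(n)\times{\rm Sp}(1)\bigr)$, and then spread it over the whole space by left translation. The consistency requirement is that this definition be independent of the coset representative, which reduces to the statement that the isotropy representation of ${\rm Sp}(n)\times{\rm Sp}(1)$ on $\mathfrak{p}$ (via $\mathrm{Ad}$) preserves the inner product $\langle\cdot,\cdot\rangle|_{\mathfrak{p}}$. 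This is where the property \eqref{skewb}, i.e.\ the $\mathrm{ad}$-invariance of the Killing form, enters: since $[\mathfrak{k},\mathfrak{p}]\subset\mathfrak{p}$ by \eqref{kprelation}, the form $B$ restricted to $\mathfrak{p}$ is $\mathrm{ad}(\mathfrak{k})$-invariant, and exponentiating gives $\mathrm{Ad}(K)$-invariance on the connected isotropy group.

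With these pieces in place, the Riemannian submersion condition $g(X,Y)=h(d\pi X,d\pi Y)$ for horizontal $X,Y$ holds essentially by construction at $\pi(e)$, and left invariance of both $g$ and $h$ propagates it to every point. I would spell this out by taking $X,Y\in\mathfrak{p}=(\ker d_e\pi)^\perp_e$ and noting that $h(d_e\pi X,d_e\pi Y)=\langle X,Y\rangle=g(X,Y)$ is exactly the defining relation pulled back through $d_e\pi$; at a general point $a\in{\rm Sp}(n,1)$ one uses $d(L_a)$ to reduce to the identity.

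The main obstacle I anticipate is the well-definedness issue in the middle paragraph, namely verifying that $\langle\cdot,\cdot\rangle|_{\mathfrak{p}}$ descends to a genuine metric on the quotient rather than merely a bilinear form on $\mathfrak{p}$; this is precisely the $\mathrm{Ad}(K)$-invariance of the restricted Killing form, and while it follows cleanly from \eqref{skewb} and \eqref{kprelation}, it is the one step that requires more than bookkeeping. Everything else—orthogonality of $\mathfrak{k}$ and $\mathfrak{p}$, identification of the vertical space, and the final matching of inner products—is immediate from the definitions already established.
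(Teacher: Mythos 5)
Your argument is correct as a proof of the literal statement, but it takes a genuinely different route from the paper's, and it stops short of what the paper actually extracts from this proposition. You prove the ``soft'' homogeneous-space facts: that $\mathfrak{p}=(\ker d_e\pi)^\perp$ because of the orthogonality built into (\ref{innprd}), that $\langle\cdot,\cdot\rangle|_{\mathfrak{p}}$ is ${\rm Ad}({\rm Sp}(n)\times{\rm Sp}(1))$-invariant by (\ref{skewb}) together with $[\mathfrak{k},\mathfrak{p}]\subset\mathfrak{p}$ from (\ref{kprelation}), and hence that a well-defined left-invariant metric descends to the quotient making $\pi$ a Riemannian submersion. The paper takes all of that for granted and instead devotes the proof to a computation: for a unit $X\in\mathfrak{p}$ and $Y=\bJ X$, it combines $K_t(X,Y)=\langle R(X,Y)Y,X\rangle=-\frac{7}{4}\|[X,Y]\|^2$ from (\ref{xyyx}), the observation that $[X,Y]\in\mathfrak{k}$ is entirely vertical, O'Neill's formula (\ref{neformula}), and Proposition \ref{holxy} (which gives $\|[X,Y]\|^2=1$) to obtain $K_b(X,\bJ X)=\bigl(-\frac{7}{4}+\frac{3}{4}\bigr)\|[X,Y]\|^2=-1$. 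That normalization --- that $\widetilde{g}$ induces holomorphic sectional curvature $-1$ downstairs, so the base really is ${\bf H}_\bh^n$ with its standard Bergman metric --- is exactly what the proof of Theorem \ref{mainthm} cites this proposition for, and your proposal never addresses it. So your well-definedness argument is a legitimate (and in the paper, implicit) complement, but to serve the role this proposition plays in the overall volume bound you would still need the curvature computation via Proposition \ref{holxy} and O'Neill's formula.
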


\begin{proof}
We need to  show that  ${\rm Sp}(n,1)/{{\rm Sp}(n)\times {\rm Sp}(1)}$ has constant holomorphic sectional  curvature $-1$  with the restriction of the scaled canonical metric $ \widetilde{g}$.

 Let $X$ represent both a unit vector field on ${\rm Sp}(n,1)/{{\rm Sp}(n)\times {\rm Sp}(1)}$ as well as its horizontal lift.
Let $X$ and $Y=\bJ X$  be given by (\ref{holx}) and (\ref{holy}).
By (\ref{dfnsc}) we have
 $$ K_t(X,Y)=\langle R(X,Y)Y,X\rangle= -\frac{7}{4}\|[X,Y]\|^2.$$
 Since $[X,Y]\in \mathfrak{k}$, $[X,Y]^{\bot}=[X,Y]$. It follows from Proposition \ref{holxy} and   O'Neill's formula (\ref{neformula}) that
 $$K_b(X,\bJ X)=K_b(X,Y)= -\|[X,Y]\|^2=-1.$$
 This implies that $ \pi $ is a Riemannian submersion from $ {\rm Sp}(n,1)$ to the quaternionic  hyperbolic $n$-space  $\mathbf{H}^n_{\mathbb{H}}$.
\end{proof}

\subsection{Wang and Gunther's results}
The following result gives Wang's quantitative version of the well-known result of Kazhdan-Marglis \cite{kama68}.
\begin{lem}{\rm(\cite[Theorem 5.2]{wan69})}\label{wanresult} Let $ G $ be a semisimple Lie group without compact factor, let $id$ be the identity of $G $, let $\rho $ be the distance function derived from a canonical metric, and let
$$ B_G=\{x\in G:\rho(id,x)\leq R_G\}.$$
 Then  for  any  discrete  subgroup  $\Gamma $ of $G$, there  exists  $g\in G$  such  that  $B_G\cap \ g\Gamma g ^{-1}={id}.$
\end{lem}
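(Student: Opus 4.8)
The plan is to read this as the quantitative Kazhdan--Margulis theorem and to prove it by a Zassenhaus-type commutator contraction, with the explicit radius $R_G$ extracted from the adjoint-norm constants $C_1,C_2$ recorded in (\ref{c1c2}). First I would pass to the Lie algebra: taking $\exp$ as a chart near $id$ and using the Baker--Campbell--Hausdorff formula, an element close to $id$ is written $x=\exp X$ with $\rho(id,x)=\|X\|$ once $R_G$ lies below the injectivity radius of $\exp$. Under this identification the ball $B_G$ becomes the norm-ball $\{X\in\mathfrak{g}:\|X\|\le R_G\}$, and conjugation by any $g$ sends $\exp X$ to $\exp({\rm Ad}(g)X)$, so the whole problem is linearized.

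The key estimate is a contraction property of the commutator map. For $x=\exp X$, $y=\exp Y$ write $xyx^{-1}y^{-1}=\exp Z$; since the definition (\ref{nadx}) of $N({\rm ad}X)$ gives $\|[X,Y]\|=\|{\rm ad}X(Y)\|\le N({\rm ad}X)\,\|Y\|$, the constants $C_1,C_2$ bound every bracket, and the BCH series yields $\|Z\|\le \kappa\,\|X\|\,\|Y\|$ with $\kappa$ a fixed multiple of $\max\{C_1,C_2\}$. I would then \emph{define} $R_G$ to be the largest radius with $\kappa R_G<1$; this is precisely the value for which the commutator map carries $B_G\times B_G$ strictly inside a smaller ball, and it is where the tabulated $C_1,C_2$ turn into the explicit dimension-dependent number.

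With the contraction in place the argument closes in two moves. For discrete $\Gamma$, any two elements of $\Gamma\cap B_G$ have commutator strictly nearer $id$; iterating, the iterated commutators form a null sequence, so discreteness forces them to terminate and $\langle \Gamma\cap B_G\rangle$ is nilpotent, lying in a connected nilpotent subgroup $N$. Because $G$ is semisimple \emph{without compact factors}, $N$ is contained in the unipotent radical of a proper parabolic, and a one-parameter subgroup of the associated split torus contracts $N$ off $id$; choosing $g$ in this torus expels the short elements of $\Gamma$ from $B_G$. \textbf{The main obstacle} is that a single $g$ must expel every nontrivial element of $\Gamma$ simultaneously without pulling distant ones back in. I expect to resolve this exactly as Kazhdan and Margulis do, by a measure/compactness argument---maximizing the minimal-displacement function $g\mapsto\min\{\rho(id,x):x\in g\Gamma g^{-1},\,x\neq id\}$ over the homogeneous space, or using properness of the action on the flag variety to exclude a $\Gamma$-invariant parabolic---to produce the desired $g$ with $B_G\cap g\Gamma g^{-1}=\{id\}$.
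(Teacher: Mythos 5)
The paper does not actually prove this lemma: it is imported verbatim from Wang \cite[Theorem 5.2]{wan69} and used as a black box, so there is no internal proof to compare against. Judged on its own terms, your sketch correctly identifies the statement as the quantitative Kazhdan--Margulis theorem, and the opening move (a Zassenhaus-type commutator contraction showing that $\Gamma\cap B_G$ generates a group lying in a connected nilpotent subgroup) is the right one. But two of your steps do not survive scrutiny. First, your $R_G$ (the largest radius with $\kappa R_G<1$ for a BCH-derived constant $\kappa$) is not Wang's constant: Wang's $R_G$ is the least positive zero of $F(t)=e^{C_1t}-2\sin C_2t-C_1t/(e^{C_1t}-1)$, obtained from sharper estimates on the differential of $\exp$ and on the displacement of points under conjugation, and the paper's main theorem depends on the resulting numerical value $R_{{\rm Sp}(n,1)}\approx 0.228$; a generic contraction constant would yield a strictly weaker and unspecified radius, which would propagate into a weaker $\mathcal{Q}(n)$. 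Second, the assertion that the connected nilpotent subgroup $N$ containing $\langle\Gamma\cap B_G\rangle$ lies in the unipotent radical of a proper parabolic is false: connected nilpotent subgroups of a semisimple group need not be unipotent (any torus, in particular a compact one such as ${\rm SO}(2)\subset{\rm SL}(2,\br)$, is abelian hence nilpotent), and since $\Gamma$ is permitted torsion here the short elements may well be elliptic. The device of contracting $N$ off the identity by a split torus therefore does not apply as stated.

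Moreover, the step you flag as ``the main obstacle'' --- producing a single $g$ that simultaneously expels every nontrivial element of $\Gamma$ from $B_G$ --- is the actual content of the theorem, and deferring it to ``exactly as Kazhdan and Margulis do'' leaves the argument incomplete. The standard proof minimizes, over $g$, the number of elements of $g\Gamma g^{-1}$ in a fixed compact neighbourhood, and at a minimizer derives a contradiction from the existence of a nontrivial short element by conjugating along a carefully chosen one-parameter subgroup whose existence uses the absence of compact factors; the quantitative version additionally requires tracking the metric distortion of that conjugation, which is precisely where $C_1$ and $C_2$ enter and where the function $F$ comes from. As written, your proposal is a plausible road map with the two defects above, not a proof; the honest alternatives are to cite Wang as the paper does, or to reproduce his estimates in full.
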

 Wang also showed that number $R_G$  is less than the injectivity radius of $G$.   Consequently, the volume of the fundamental domain of any discrete subgroup $\Gamma$ of $ G ,$ when viewed as a group of left translations of $G$, is bounded from below by the volume of a $\rho$ -ball of radius $\frac{R_G}{2}$.

Since ${\rm Sp}(n,1)$ is a semisimple Lie group without compact factor.  Let $C_1$ and  $C_2$  be given by (\ref{c1c2}). By \cite{wan69}  the number $R_{{\rm Sp}(n,1)} $ is  the least positive zero of the real-valued function
\begin{equation}  F(t)=\exp C_1t-2\sin C_2t-\frac{C_1t}{\exp C_1t-1}. \end{equation}
That is  \begin{equation}\label{rgsp}R_{{\rm Sp}(n,1)}\approx0.228\ldots\end{equation}

\vspace{1.5mm}
Let $V(d, k, r)$ denote the volume of a ball of radius $r$ in the
complete simply connected Riemannian manifold of dimension $d$ with constant curvature $k$.
In \cite{abe12},  Adeboye and Wei  obtained the following formula
\begin{equation}\label{vdkr} V(d,k,r)=\frac{2(\frac{\pi}{k})^{\frac{d}{2}}}{\Gamma(\frac{d}{2})}\int^{\min(r\sqrt{k},\ \pi)}_0\sin^{d-1}\rho d\rho.\end{equation}

\vspace{2mm}
We recall the following  comparison theorem of Gunther.

 \begin{lem}{\rm  (\cite[Theorem 3.101]{gal90})}\label{gunresult}
 Let  $M$  be  a  complete  Riemannian  manifold  of  dimension  $d$. For  $ m \in M$,  let  $ B_{m}(r)$ be  a  ball  which  does  not  meet  the  cut-locus  of  $m$.
If  the  sectional  curvatures  of  $M$  are bounded above by  a  constant  $b$, then
$$ {\rm Vol}[B_{m}(r)] \geq V(d,b,r).$$
\end{lem}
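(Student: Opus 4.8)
The plan is to prove this by the classical volume-density comparison in geodesic polar coordinates, extracting the estimate from the upper sectional curvature bound via a matrix Jacobi/Riccati comparison. First I would use the hypothesis that $B_m(r)$ does not meet the cut-locus of $m$: on the open ball $\exp_m$ restricts to a diffeomorphism from the Euclidean $r$-ball in $T_mM$ onto $B_m(r)$, so in particular there is no point conjugate to $m$ within distance $r$. This lets me write the volume in geodesic polar coordinates as
$$ {\rm Vol}[B_m(r)]=\int_{S^{d-1}}\int_0^r A(t,v)\,dt\,dv, $$
where $A(t,v)$ is the density of the Riemannian volume form along the unit-speed geodesic $\gamma_v(t)=\exp_m(tv)$ and $S^{d-1}\subset T_mM$ is the unit sphere. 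The whole problem then reduces to the pointwise density inequality $A(t,v)\ge s_b(t)^{d-1}$, where $s_b$ solves $s_b''+b\,s_b=0$, $s_b(0)=0$, $s_b'(0)=1$ (so $s_b(t)=\sin(\sqrt{b}\,t)/\sqrt{b}$ when $b>0$, and analogously for $b\le 0$).

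Next I would express $A(t,v)$ through Jacobi fields. Choosing a parallel orthonormal frame $e_1,\dots,e_{d-1}$ along $\gamma_v$ orthogonal to $\gamma_v'$, let $\mathcal{A}(t)$ be the $(d-1)\times(d-1)$ matrix whose columns record the Jacobi fields $J_i$ with $J_i(0)=0$ and $J_i'(0)=e_i(0)$. Then $\mathcal{A}$ solves the matrix Jacobi equation $\mathcal{A}''+\Omega\mathcal{A}=0$ with $\mathcal{A}(0)=0$, $\mathcal{A}'(0)=I$, where $\Omega(t)$ is the symmetric matrix with entries $\langle R(e_i,\gamma_v')\gamma_v',e_j\rangle$, and the density is $A(t,v)=\det\mathcal{A}(t)$. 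By the absence of conjugate points, $\mathcal{A}(t)$ is invertible on $(0,r]$, so the symmetric shape operator $U(t)=\mathcal{A}'(t)\mathcal{A}(t)^{-1}$ is well defined and satisfies the Riccati equation $U'+U^2+\Omega=0$, while Jacobi's formula gives $(\log\det\mathcal{A})'={\rm tr}\,U$. In the model space of constant curvature $b$ the analogous quantities are $\bar{\mathcal{A}}(t)=s_b(t)I$ and $\bar U(t)=(s_b'/s_b)I$.

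The heart of the argument, and the step I expect to be the main obstacle, is the matrix Riccati comparison $U(t)\ge\bar U(t)$ as symmetric forms. The hypothesis $K_M\le b$ says exactly that $\Omega(t)\le b\,I$, so $\bar U$ solves the same Riccati equation with the larger potential $bI$; one then shows that $W=U-\bar U$, which vanishes to the right order as $t\to0^{+}$ and obeys a linear Riccati-type equation forced by the nonnegative term $bI-\Omega$, stays positive semidefinite on $(0,r]$. Care is essential here because, unlike the scalar Rauch estimate for a single Jacobi field, pointwise norm bounds for individual Jacobi fields do not integrate termwise into a determinant bound; the clean route is to establish $U\ge\bar U$ directly (equivalently, to compare the index forms of the two spaces). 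This yields ${\rm tr}\,U\ge{\rm tr}\,\bar U=(d-1)s_b'/s_b$, hence $(\log\det\mathcal{A})'\ge(\log s_b^{d-1})'$, and since $\det\mathcal{A}(t)\sim t^{d-1}\sim s_b(t)^{d-1}$ as $t\to0$, integration gives the desired density inequality $A(t,v)=\det\mathcal{A}(t)\ge s_b(t)^{d-1}$ throughout $(0,r]$.

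Finally I would integrate the density inequality over the polar coordinates:
$$ {\rm Vol}[B_m(r)]=\int_{S^{d-1}}\int_0^r A(t,v)\,dt\,dv\ge {\rm Vol}(S^{d-1})\int_0^r s_b(t)^{d-1}\,dt. $$
Using ${\rm Vol}(S^{d-1})=2\pi^{d/2}/\Gamma(d/2)$ and the substitution $\rho=\sqrt{b}\,t$, the right-hand side is exactly $V(d,b,r)$ in the form (\ref{vdkr}), the cut-off $\min(r\sqrt{b},\pi)$ arising because $s_b$ first vanishes at $t=\pi/\sqrt{b}$ when $b>0$ (beyond which the model density contributes nothing while $\det\mathcal{A}$ stays positive). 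This establishes ${\rm Vol}[B_m(r)]\ge V(d,b,r)$ and completes the proof.
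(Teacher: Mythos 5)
This lemma is not proved in the paper at all: it is G\"unther's volume comparison theorem, quoted from \cite[Theorem~3.101]{gal90} and used as a black box, so there is no internal argument to measure yours against --- the only question is whether your reconstruction is a correct proof of the cited result, and it is. You set up geodesic polar coordinates (legitimate because $B_m(r)$ avoids the cut locus, so $\exp_m$ is a diffeomorphism on the ball and there are no conjugate points), reduce everything to the density inequality $\det\mathcal{A}(t)\ge s_b(t)^{d-1}$, and correctly flag the one genuinely delicate point: a \emph{lower} volume bound from an \emph{upper} curvature bound cannot come from the traced Riccati equation alone (Cauchy--Schwarz points the wrong way there), so one needs the full matrix comparison $U\ge\bar U$, which is precisely where the hypothesis on all sectional curvatures, rather than on Ricci, enters. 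Your sketch of that step is sound: since $\bar U=(s_b'/s_b)I$ commutes with everything, $W=U-\bar U$ satisfies $W'=-W^2-2(s_b'/s_b)W+(bI-\Omega)$ with $W\to 0$ as $t\to 0^{+}$, and the standard minimal-eigenvalue argument (at a first zero eigenvalue with eigenvector $v$ one has $Wv=0$, hence $\tfrac{d}{dt}\langle Wv,v\rangle=\langle(bI-\Omega)v,v\rangle\ge 0$) keeps $W\ge 0$. For the record, the cited reference reaches the same inequality ${\rm tr}\,U\ge (d-1)s_b'/s_b$ by an index-form route --- at each fixed $t_0$ choose the basis of Jacobi fields orthonormal at $t_0$ and bound each $\langle J_i',J_i\rangle(t_0)=I_{t_0}(J_i,J_i)$ below by $(s_b'/s_b)(t_0)$ via the one-dimensional comparison --- which avoids the matrix ODE; the two arguments are equivalent in substance. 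Your final integration, including the saturation of the model integral at $\rho=\pi$ when $b>0$, matches the formula (\ref{vdkr}) used in the paper.
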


\subsection{The proof of main result}
In order to prove our main result, we need the following four lemmas.
The following two lemmas have been proved in \cite{abe12,abe14}.
\begin{lem}{\rm (\cite[Lemma 3.4]{abe14})}\label{togeo1}
Let  G  be  a  semisimple  Lie  group  and  $\mathfrak{g}$  be  its  Lie  algebra,with  Cartan    decomposition  $\mathfrak{g}=\mathfrak{k}\oplus\mathfrak{p}$. Let  $K$  be  the  maximal  compact  subgroup  of  $G$  with  Lie  algebra $\mathfrak{k}$. Then , with  respect  to  the  canonical  metric, $K$  is  totally  geodesic  in  $G$.
\end{lem}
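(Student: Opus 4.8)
The plan is to show that the second fundamental form of $K$ in $G$ vanishes identically, which is exactly the statement that $K$ is totally geodesic. Write $\nabla$ for the Levi--Civita connection of the canonical metric on $G$, and recall the Gauss decomposition $\nabla_X Y=\nabla^K_X Y+\mathrm{II}(X,Y)$ for vector fields $X,Y$ tangent to $K$, where $\nabla^K$ is the induced connection on $K$ and $\mathrm{II}(X,Y)=(\nabla_X Y)^{\perp}$ is the normal component. Thus the whole lemma reduces to proving $(\nabla_X Y)^{\perp}=0$ for all $X,Y$ tangent to $K$.

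First I would identify the tangent and normal spaces at the identity $e$. There $T_eK=\mathfrak{k}$, and since the inner product (\ref{innprd}) is defined so that $\mathfrak{k}$ and $\mathfrak{p}$ are mutually orthogonal, the normal space $(T_eK)^{\perp}$ is precisely $\mathfrak{p}$. Hence computing $\mathrm{II}$ at $e$ just means extracting the $\mathfrak{p}$-component of $\nabla_U V$ for $U,V\in\mathfrak{k}$. Now I would invoke the connection formula $\nabla_U V=\tfrac12[U,V]$ for left-invariant $U,V\in\mathfrak{k}$ established above, together with the bracket relation $[\mathfrak{k},\mathfrak{k}]\subset\mathfrak{k}$ from (\ref{kprelation}). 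These immediately give $\nabla_U V=\tfrac12[U,V]\in\mathfrak{k}$, so its $\mathfrak{p}$-component is zero and $\mathrm{II}(U,V)=0$ at $e$.

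To upgrade this from the single point $e$ to all of $K$, I would exploit left-invariance. Since $K$ is a subgroup, for each $k\in K$ the left translation $L_k$ is an isometry of $G$ carrying $K$ into $K$; consequently the left-invariant extension $\widetilde U$ of any $U\in\mathfrak{k}$ remains tangent to $K$ at every point of $K$, and such extensions span $T_kK$ for each $k$. Because $\nabla_{\widetilde U}\widetilde V=\tfrac12\,\widetilde{[U,V]}$ is the left-invariant field generated by $[U,V]\in\mathfrak{k}$, it is tangent to $K$ everywhere along $K$; tensoriality of the second fundamental form then forces $\mathrm{II}\equiv 0$ on $K$, so $K$ is totally geodesic.

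The step demanding the most care is this last one, namely reconciling the left-invariant computation on $G$ with the submanifold geometry of $K$: one must check that the frame coming from $\mathfrak{k}$ really is tangent to $K$ at every point and that the normal bundle is the left-translate of $\mathfrak{p}$. An alternative route that bypasses this bookkeeping is to realize $K$ as a fixed-point set. The Cartan involution $\theta(X)=JXJ$ integrates to the global automorphism $\Theta(g)=JgJ$ of $G$ (here $J^2=I$, so $\Theta^2=\mathrm{id}$ and $d\Theta_e=\theta$); since $\theta$ preserves the Killing form and fixes $\mathfrak{k}$ while negating $\mathfrak{p}$, it preserves the inner product (\ref{innprd}), and left-invariance then makes $\Theta$ a global isometry. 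As $K$ is the identity component of $\mathrm{Fix}(\Theta)$, and the fixed-point set of an isometry is always totally geodesic, the lemma follows at once. I would present the connection-based argument as the main proof and mention this involution argument as the conceptual reason behind it.
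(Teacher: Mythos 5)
Your proof is correct; note that the paper itself does not prove this lemma but simply cites \cite[Lemma 3.4]{abe14}, and your main argument (the connection formula $\nabla_U V=\tfrac12[U,V]$ together with $[\mathfrak{k},\mathfrak{k}]\subset\mathfrak{k}$ forces the second fundamental form to vanish, then left-translation by elements of $K$ spreads this from the identity to all of $K$) is essentially the argument given in that reference. The fixed-point-set-of-an-isometry remark is a nice conceptual supplement and is also valid, provided one notes (as you do implicitly) that the Cartan involution integrates to a global isometric involution of the left-invariant canonical metric because its differential at $e$ is $+1$ on $\mathfrak{k}$ and $-1$ on $\mathfrak{p}$, hence orthogonal for the inner product (\ref{innprd}).
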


\begin{lem}{\rm (\cite[Lemma 3.5]{abe14})}\label{partvol}
 Let  $K\rightarrow M \stackrel{q}{\longrightarrow} N$ denote  a  Riemannian submersion and  $K$  is a  compact  and  totally  geodesic  submanifold  of  $M$. Then  for any subset $ Z\subset N$,
 $${\rm Vol}[q^{-1}(Z)]=Vol[Z]\cdot {\rm Vol}[K].$$
\end{lem}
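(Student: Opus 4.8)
The plan is to prove the identity by a Fubini-type integration along the fibres of the submersion $q$, using the totally geodesic hypothesis only at the end, to guarantee that every fibre carries the same volume $\mathrm{Vol}[K]$. First I would fix a point $z\in N$ and build a local orthonormal frame on $M$ adapted to the splitting $T_xM=(\mathrm{Ker}\,dq)^{\perp}_x\oplus(\mathrm{Ker}\,dq)_x$ from the definition of a Riemannian submersion: take an orthonormal frame $X_1,\dots,X_{\dim N}$ near $z$ in $N$, lift it horizontally to basic fields $\widetilde X_1,\dots,\widetilde X_{\dim N}$ on $M$, and complete it by an orthonormal vertical frame $V_1,\dots,V_{\dim K}$ tangent to the fibres. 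Since $q$ is a Riemannian submersion, $dq$ restricts to a linear isometry on each horizontal space, so the dual horizontal coframe is exactly the pullback $q^{\ast}$ of the coframe on $N$. Consequently the Riemannian volume form of $M$ factors locally as $\omega_M=q^{\ast}\omega_N\wedge\omega_{\mathrm{fib}}$, where $\omega_{\mathrm{fib}}$ is the volume form of the fibre in the induced metric.

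Second, I would apply Fubini's theorem to this factorisation. For a measurable set $Z\subset N$ this gives
\[
\mathrm{Vol}[q^{-1}(Z)]=\int_{q^{-1}(Z)}\omega_M=\int_Z\Big(\int_{q^{-1}(z)}\omega_{\mathrm{fib}}\Big)\,dV_N(z)=\int_Z\mathrm{Vol}[q^{-1}(z)]\,dV_N(z).
\]
Everything then reduces to showing that the fibre volume $z\mapsto\mathrm{Vol}[q^{-1}(z)]$ is constant and equal to $\mathrm{Vol}[K]$.

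The crux is exactly this constancy, and here I would invoke that the fibres are totally geodesic. The rate of change of $\omega_{\mathrm{fib}}$ under the flow of a basic horizontal field $\widetilde X$ is governed by the mean curvature of the fibres (the vertical trace of their second fundamental form). For totally geodesic fibres the second fundamental form vanishes, so the mean curvature is zero; hence the horizontal flow sweeps one fibre onto the next while preserving the vertical volume form, and $\mathrm{Vol}[q^{-1}(z)]$ is locally, therefore globally (as $N$ is connected), constant, equal to the volume of the distinguished fibre $K$. In the application $M={\rm Sp}(n,1)$, $N={\bf H}_\bh^n$ and $q=\pi$ this step is even cleaner: the fibres are the left cosets of ${\rm Sp}(n)\times{\rm Sp}(1)$, and because the metric is left invariant, left translation is an isometry carrying one fibre onto another, so all fibres are isometric to $K$; Lemma \ref{togeo1} secures the totally geodesic hypothesis needed to run the general argument.

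Finally, substituting $\mathrm{Vol}[q^{-1}(z)]\equiv\mathrm{Vol}[K]$ into the Fubini formula yields
\[
\mathrm{Vol}[q^{-1}(Z)]=\mathrm{Vol}[K]\int_Z dV_N=\mathrm{Vol}[Z]\cdot\mathrm{Vol}[K],
\]
as claimed. The main obstacle I anticipate is the rigorous justification that totally geodesic fibres have constant volume across the base; I would settle it either through the infinitesimal mean-curvature computation sketched above, or, more transparently in our homogeneous setting, by the isometric identification of the fibres via left translation together with Lemma \ref{togeo1}.
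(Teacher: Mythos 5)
Your argument is correct and follows essentially the same route as the proof in the source this paper cites (\cite[Lemma 3.5]{abe14}); the paper itself states the lemma without proof, deferring entirely to that reference. The key steps---the local splitting $\omega_M=q^{\ast}\omega_N\wedge\omega_{\mathrm{fib}}$ coming from the isometry of $dq$ on horizontal spaces, the Fubini reduction to fibre volumes, and the constancy of the fibre volume because totally geodesic fibres are minimal---are exactly the standard argument, and your alternative justification of constancy via left translation in the homogeneous case ${\rm Sp}(n,1)\to{\bf H}_\bh^n$ is also valid.
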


The following simple lemma is the main tool we use to produce our estimate.
\begin{lem}\label{volform}
Let  $\Gamma$  be  a  discrete  subgroup  of   ${\rm Sp}(n,1)$,   then
$$ {\rm Vol}[{\rm Sp}(n,1)/\Gamma]\geq V(d_0,k_0,r_0),$$
where  $d_0=2n^2+5n+3, k_0=\frac{3+4\sqrt{2}}{2}$  and $\ r_0=0.114.$
\end{lem}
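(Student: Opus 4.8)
The plan is to apply Wang's Lemma \ref{wanresult} together with Gunther's comparison Lemma \ref{gunresult} to the total space ${\rm Sp}(n,1)$ equipped with the scaled canonical metric $\widetilde{g}$, and then descend the resulting volume estimate to the quotient. First I would invoke Lemma \ref{wanresult}: since ${\rm Sp}(n,1)$ is a semisimple Lie group without compact factor, for any discrete subgroup $\Gamma$ there is a conjugate $g\Gamma g^{-1}$ whose intersection with the ball $B_{{\rm Sp}(n,1)}$ of radius $R_{{\rm Sp}(n,1)}$ about the identity is trivial. As Wang observed, this forces the fundamental domain of $\Gamma$ acting by left translation to contain an embedded $\rho$-ball of radius $\tfrac{R_{{\rm Sp}(n,1)}}{2}$, so that
\begin{equation*}
{\rm Vol}[{\rm Sp}(n,1)/\Gamma]\geq {\rm Vol}[B_{m}(r_0)],\qquad r_0=\frac{R_{{\rm Sp}(n,1)}}{2}.
\end{equation*}
By the numerical value \eqref{rgsp}, $R_{{\rm Sp}(n,1)}\approx 0.228$, and halving gives $r_0=0.114$, which matches the statement.

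Next I would bound the volume of this embedded ball from below using Gunther's theorem. The dimension of ${\rm Sp}(n,1)$ equals $\dim\mathfrak{sp}(n,1)=2n^2+5n+3$, which is the value $d_0$ in the statement; this is exactly the cardinality of the basis $\mathfrak{B}$ recorded in Definition \ref{defstba}. By Proposition \ref{secub}, the sectional curvatures of ${\rm Sp}(n,1)$ with respect to $\widetilde{g}$ are bounded above by $b=\tfrac{3+4\sqrt{2}}{2}$, which is the value $k_0$ in the statement. To apply Lemma \ref{gunresult} I must check that the ball $B_{m}(r_0)$ does not meet the cut locus of its center $m$; this is guaranteed because, as Wang noted immediately after Lemma \ref{wanresult}, $R_{{\rm Sp}(n,1)}$ is less than the injectivity radius of ${\rm Sp}(n,1)$, so a fortiori the smaller radius $r_0=\tfrac{R_{{\rm Sp}(n,1)}}{2}$ lies within the injectivity radius and the ball is free of cut points. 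Gunther's comparison then yields
\begin{equation*}
{\rm Vol}[B_{m}(r_0)]\geq V(d_0,k_0,r_0),
\end{equation*}
and combining the two displayed inequalities gives the asserted lower bound.

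The step I expect to require the most care is the bookkeeping that identifies the three parameters correctly and justifies the curvature bound at the level needed. The sectional-curvature estimate of Proposition \ref{secub} is the genuinely substantive input, and it must hold for \emph{all} tangent planes, not merely those spanned by basis elements (Proposition \ref{basecb} alone would be insufficient), so the full strength of Proposition \ref{secub} is essential here. The remaining subtlety is that Gunther's comparison and Wang's embedding must both be taken with respect to the \emph{same} metric $\widetilde{g}$: Wang's constant $R_{{\rm Sp}(n,1)}$ in \eqref{rgsp} was computed using the normalized constants $C_1=1,\ C_2=\sqrt{2}$ of \eqref{c1c2}, which are precisely the values attached to $\widetilde{g}$, so the radius $r_0$ and the curvature bound $k_0$ are consistent. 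Once this consistency is confirmed, the proof is simply the concatenation of Lemmas \ref{wanresult} and \ref{gunresult}, with no further computation needed.
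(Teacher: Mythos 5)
Your proof is correct and follows exactly the paper's route: the paper's own proof of this lemma is a one-line concatenation of Lemma \ref{wanresult} (Wang's embedded ball of radius $\frac{R_{{\rm Sp}(n,1)}}{2}$) and Lemma \ref{gunresult} (Gunther's comparison), with $d_0$, $k_0$, $r_0$ read off from Definition \ref{defstba}, Proposition \ref{secub} and (\ref{rgsp}) respectively. Your additional remarks on the cut locus and the consistency of the metric normalization are sound and merely make explicit what the paper leaves implicit.
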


\begin{proof}
The inequality follows  from Lemma \ref{wanresult} and \ref{gunresult}. The values of $d_0, k_0$ and
$r_0$  follow from Definition \ref{defstba}, Proposition \ref{secub} and (\ref{rgsp}), respectively.
\end{proof}

\begin{lem}\label{volsp}
With respect to the metric $ \widetilde{g} $  given by  (\ref{scmetric}), $$ {\rm Vol}[{\rm Sp}(n)\times {\rm Sp}(1)]=\frac{2^n(\pi)^{(n^2+n+\frac{3}{2})}\Gamma(\frac{4n+1}{2})}{(2n+1)!(2n-1)!(2n-3)!\cdots!5!3!1!~}.$$
\end{lem}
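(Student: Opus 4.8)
The plan is to compute the Riemannian volume of the compact Lie group $K = {\rm Sp}(n)\times {\rm Sp}(1)$ equipped with the metric $\widetilde g$, restricted from the scaled canonical metric on ${\rm Sp}(n,1)$. Since $\widetilde g$ restricted to $\mathfrak{k}$ is a constant multiple of the negative of the Killing form, it is a bi-invariant metric on $K$, so the volume is that of $K$ in the bi-invariant metric determined by our normalization in Lemma \ref{disu} and Definition \ref{qmetric}. The standard route is to relate this volume to the known volumes of the compact symplectic groups ${\rm Sp}(n)$ and ${\rm Sp}(1)$ computed with respect to their \emph{own} canonical (Killing-form) bi-invariant metrics, and then apply the appropriate scaling factors coming from the discrepancy between our inner product $\widetilde g$ and those canonical metrics.

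First I would recall the classical formula for the volume of ${\rm Sp}(m)$ with respect to a reference bi-invariant metric. The compact group ${\rm Sp}(m)$ has dimension $2m^2+m$, and its volume (say with the metric induced by the trace form $\langle X,Y\rangle = -\tfrac12{\rm Re}\,{\rm tr}(XY)$, or whichever normalization the references \cite{Peter} use) is a product of the form $\prod_{j=1}^{m}\frac{2\pi^{2j}}{(2j-1)!}$, up to a fixed power of $2\pi$; this is the source of the $(2n+1)!(2n-1)!\cdots 3!\,1!$ denominator and the powers of $\pi$ in the stated answer. Since $K={\rm Sp}(n)\times {\rm Sp}(1)$ is a Riemannian product (the two blocks are orthogonal in the Cartan decomposition \eqref{dek}), its volume is the product ${\rm Vol}[{\rm Sp}(n)]\cdot {\rm Vol}[{\rm Sp}(1)]$, with ${\rm Sp}(1)\cong S^3$ contributing the $\Gamma(\frac{4n+1}{2})$ and the extra $\pi^{3/2}$ factors after rescaling.

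The crux of the argument, and what I expect to be the main obstacle, is bookkeeping the \textbf{scaling factor}. A volume form scales as $\lambda^{d/2}$ under $g\mapsto \lambda g$ on a $d$-dimensional manifold, where here $d=\dim K = (2n^2+n)+3 = 2n^2+n+3$. By Corollary \ref{cor1} the canonical metric $g$ is $8(n+2)$ times the orthonormal one on the basis $\mathfrak B$, and $\widetilde g = \frac{1}{2(n+2)}g$ assigns squared length $\frac{8(n+2)}{2(n+2)}=4$ to each basis vector of $\mathfrak k$; so relative to the orthonormal-basis volume, $\widetilde g$ contributes a factor $(4)^{d/2}=2^{d}=2^{2n^2+n+3}$. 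The delicate point is matching this basis-normalization against whatever normalization the classical $\prod$ formula for ${\rm Sp}(m)$ uses, since those formulas are typically stated for a specific trace form rather than for the Killing form; one must carefully track the constant $c$ with $\langle\cdot,\cdot\rangle_{\widetilde g} = c\cdot(\text{reference form})$ on each factor and raise it to the half-dimension power for each factor separately.

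Concretely, the steps in order would be: (i) verify $\widetilde g|_{\mathfrak k}$ is bi-invariant so that $K$ is a Riemannian symmetric space and its volume factors over the product; (ii) quote ${\rm Vol}[{\rm Sp}(m)]$ and ${\rm Vol}[{\rm Sp}(1)]$ in a fixed reference metric; (iii) compute the ratio of $\widetilde g$ to that reference metric on each simple factor using Lemma \ref{disu}; (iv) multiply the reference volumes by the corresponding powers of that ratio and assemble the result; and (v) simplify the resulting product of factorials and Gamma-values, using $\Gamma(\frac{4n+1}{2})$ to absorb the odd-factorial tail from the $S^3$ factor, to match the claimed closed form. I expect step (iii)–(iv) — getting every power of $2$ and $\pi$ exactly right — to be where essentially all the work lies; the geometry is immediate and it is purely a normalization computation.
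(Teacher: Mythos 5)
Your route is genuinely different from the paper's, and in its present form it has a gap. The paper does not compute ${\rm Vol}[{\rm Sp}(n)]\cdot{\rm Vol}[{\rm Sp}(1)]$ as a product of two group volumes at all: it realizes ${\rm Sp}(n)\times{\rm Sp}(1)$ as the fiber of the submersion ${\rm Sp}(n+1)\to{\rm Sp}(n+1)/({\rm Sp}(n)\times{\rm Sp}(1))$, quotes from Gilmore the volume of the total space ${\rm Sp}(n+1)$ (the source of $2^{n+1}\pi^{(n+1)(n+2)}$ over the odd factorials) and the volume of the $4n$-dimensional base (the source of $2\pi^{(4n+1)/2}/\Gamma(\frac{4n+1}{2})$), and divides. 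Your product decomposition is legitimate in principle, since the two blocks of $\mathfrak{k}$ are orthogonal for $\widetilde g$ and the volume of a Riemannian product is the product of the volumes; but the proposal explicitly defers the entire quantitative content to steps (iii)--(iv) and never produces the stated formula.

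The concrete problem is your account of where $\Gamma(\frac{4n+1}{2})$ comes from. You say it is contributed by the $S^3$ factor ``after rescaling.'' It cannot be: rescaling the metric on the fixed $3$-dimensional factor ${\rm Sp}(1)$ by a constant $\lambda$ multiplies its volume by $\lambda^{3/2}$, a bounded power of a constant, whereas $\Gamma(\frac{4n+1}{2})$ grows like $(2n)!$. In the paper's argument this factor is the reciprocal of the volume of the $4n$-dimensional coset space ${\rm Sp}(n+1)/({\rm Sp}(n)\times{\rm Sp}(1))$, an object that has no counterpart in a direct product computation. If you carry out steps (ii)--(iv) honestly you will land on an expression of the form $2^{a}\pi^{b}/\prod_j(2j-1)!$ containing no half-integer Gamma value, and you would then have to prove, e.g.\ via the duplication formula $\Gamma(2n+\tfrac12)=\sqrt{\pi}\,(4n)!/\bigl(4^{2n}(2n)!\bigr)$, that it coincides with the claimed closed form; a check at $n=1$ shows this identification is not automatic, and it hinges on pinning down exactly which invariant metric Gilmore's formulas refer to (they are tied to the chain of unit-sphere fibrations, which are not Riemannian submersions for a bi-invariant metric on ${\rm Sp}(m)$ because of the Berger-sphere phenomenon on $S^{4m-1}={\rm Sp}(m)/{\rm Sp}(m-1)$). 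Until that reconciliation of normalizations is actually done, the proposal does not establish the lemma.
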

\begin{proof} The volumes of the classical compact groups are given explicitly in \cite[Chapter 9]{gilmore}.  The volume formulae  with respect to the metric   $ \widetilde{g} $  are
$${\rm Vol}[{\rm Sp}(n+1)]=\frac{2^{n+1}(\pi)^{(n+1)(n+2)}}{(2n+1)!(2n-1)!(2n-3)!\cdots!5!3!1!~}$$
and
$${\rm Vol}[{\rm Sp}(n+1)/{{\rm Sp}(n)\times {\rm Sp}(1)}]=\frac{2\,(\pi)^{\frac{4n+1}{2}}}{\Gamma(\frac{4n+1}{2})}.$$
Hence
$$ {\rm Vol}[{\rm Sp}(n)\times {\rm Sp}(1)]=\frac{2^n(\pi)^{(n^2+n+\frac{3}{2})}\Gamma(\frac{4n+1}{2})}{(2n+1)!(2n-1)!(2n-3)!\cdots!5!3!1!~}.$$
\end{proof}

 We now are ready to give a proof of Theorem \ref{mainthm}, which for convenience is restated below.

\vspace{2mm}

{\bf Theorem 1.3}
\quad {\it The volume of a quaternionic  hyperbolic $ n$-orbifold is bounded below by $\mathcal{Q}(n)$,  an  explicit constant depending only on dimension, given by
$$ \mathcal{Q}(n)=\frac{\pi^{\frac{3n}{2}}(2n+1)!(2n-1)!\cdots!5!3!1!}{2^{n-1}
\Gamma(\frac{2n^2+5n+3}{2})\Gamma(\frac{4n+1}{2}) (\frac{3+4\sqrt{2}}{2})^{\frac{2n^{2}+5n+3}{2}}} \int^{0.2372}_0(\sin\rho)^{2n^2+5n+2} d\rho.$$ }

\begin{proof}
 By Proposition \ref{prokb1}, the holomorphic sectional curvature of ${\bf H}_\bh^n$ is normalized to be $-1$ by the metric $\widetilde{g}$ given by (\ref{scmetric}). Let $Q$ be a quaternionic hyperbolic $n$-orbifold given by $$Q={\bf H}_\bh^n/\Gamma=[{\rm Sp}(n,1)/{\rm Sp}(n)\times {\rm Sp}(1)]/\Gamma.$$   Then the quotient map $\pi$ given by (\ref{quopi}) induces another Riemannian submersion
 $$\pi^{\prime}: {\rm Sp}(n,1)/\Gamma \to Q.$$
 The fibers of $\pi^{\prime}$ on the smooth points of $Q$ are totally geodesic embedded copies of ${\rm Sp}(n)\times {\rm Sp}(1).$
  By  Lemmas \ref{togeo1}, \ref{partvol}  and \ref{volform}, we have
$$V(d_0,k_0,r_0)\leq {\rm Vol}[{\rm Sp}(n,1)/\Gamma]\leq {\rm Vol}[\pi^{-1}(Q)]={\rm Vol}[Q]\cdot {\rm Vol}[{\rm Sp}(n)\times {\rm Sp}(1)].$$
Hence
$$ {\rm Vol}[Q] \ge   \frac{V(d_0,k_0,r_0)}{{\rm Vol}[{\rm Sp}(n)\times {\rm Sp}(1)]}=\mathcal{Q}(n).$$
The proof follows from Lemma \ref{volsp} and (\ref{vdkr}).
\end{proof}

\section{Slight improvement  for real and complex cases}\label{realcomplex}

By similar way of  Proposition \ref{secub}, we can  reestimate the sectional curvatures of ${\rm SO_o}(n,1)$ and
${\rm SU}(n,1)$.

\begin{pro}\label{secub-r-c}
 \begin{itemize}
   \item[(1)]  The  sectional  curvatures $k$  of  ${\rm SO_o}(n,1)$  with  respect  to  $\widetilde{g}$ given by \cite[Definition 1.5]{abe12} are  bounded  above by 
$\left\{
  \begin{array}{ll}
   \frac{1}{4}, & \hbox{when}\   n=2 \ \  (see \ \cite{abe12});  \\
    \frac{13}{4}, & \hbox{when}\ n=3; \\
    \frac{3+4\sqrt{2}}{2}, & \hbox{when}\   n\geq 4.
  \end{array}
\right.$

   \item[(2)]  The  sectional  curvatures  of  ${\rm SU}(n,1)$ with  respect  to  $\widetilde{g}$ given by Definition 2.6 in  \cite{abe14} are  bounded  above by
 $\frac{13}{4}.$
 \end{itemize}
 \end{pro}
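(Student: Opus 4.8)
The plan is to transplant, essentially verbatim, the six-term reduction from the proof of Proposition~\ref{secub}. The connection and curvature identities of Proposition~\ref{pro2.7} and the symmetries (\ref{proc1})--(\ref{proc2}) use only the bracket relations $[\mathfrak{k},\mathfrak{k}]\subset\mathfrak{k}$, $[\mathfrak{k},\mathfrak{p}]\subset\mathfrak{p}$, $[\mathfrak{p},\mathfrak{p}]\subset\mathfrak{k}$ together with the sign convention (\ref{innprd}), so they hold unchanged for $\mathfrak{so}(n,1)$ and $\mathfrak{su}(n,1)$. Writing a general tangent $2$-plane as the span of $X+U$ and $Y+V$ with $X,Y\in\mathfrak{p}$ and $U,V\in\mathfrak{k}$, the same expansion collapses $\langle R(X+U,Y+V)(Y+V),X+U\rangle$ to the six surviving terms of Proposition~\ref{secub}. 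Bounding each term by the scaled constants $C_1,C_2$ exactly as there, and using $\langle R(X,Y)Y,X\rangle\le 0$, produces the single master estimate $\tfrac14 C_2^2+C_1^2+2C_1C_2$ for every sectional curvature. As a sanity check, $C_1=1,\ C_2=\sqrt2$ recovers the quaternionic value $\tfrac{3+4\sqrt2}{2}$.

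It then remains only to insert the scaled constants for each group, normalized (as in \cite{abe12,abe14}) so that $C_1=1$. For $\mathrm{SU}(n,1)$ I would show $C_2=C_1=1$: the extremal element of $\mathfrak{k}=\mathfrak{u}(n)$ is a diagonal matrix with two opposite purely imaginary entries, and a direct computation with the Killing form gives its adjoint operator norm on $\mathfrak{u}(n)$ equal to $C_1$ itself, the latter being attained on $\mathfrak{p}$ in a holomorphic direction; hence no gain over $C_1$ occurs. The master estimate then reads $\tfrac14+1+2=\tfrac{13}{4}$, which is part~(2).

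For $\mathrm{SO}_o(n,1)$ the normalization again forces $C_1=1$, and the whole problem reduces to the operator norm of $\mathrm{ad}_U$, $U\in\mathfrak{so}(n)$, acting on $\mathfrak{so}(n)\oplus\br^n$; computing this as a function of $n$ is the step I expect to be the main obstacle. A single rotation $\alpha_{12}$ has equal norms on the standard module $\br^n$ and on the adjoint module $\mathfrak{so}(n)$, giving $C_2=C_1$; a double rotation $\alpha_{12}+\alpha_{34}$ instead acts on $\mathfrak{so}(n)$ with norm $\sqrt2$ times larger (on a plane such as $\mathrm{span}\{\alpha_{13}-\alpha_{24},\,\alpha_{23}+\alpha_{14}\}$ the two rotations reinforce), giving $C_2=\sqrt2\,C_1$. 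Such an element requires two disjoint coordinate $2$-planes and hence exists exactly when $n\ge 4$; for $n=3$ every element of $\mathfrak{so}(3)$ is a single rotation, so the enhancement is impossible. Thus $C_2=\sqrt2$ and the bound is $\tfrac14\cdot2+1+2\sqrt2=\tfrac{3+4\sqrt2}{2}$ for $n\ge4$, while $C_2=1$ and the bound is $\tfrac{13}{4}$ for $n=3$.

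Finally, for $n=2$ the algebra $\mathfrak{so}(2)$ is one-dimensional and abelian, so $U$ and $V$ are proportional, $[U,V]=0$, and the terms $\langle R(U,V)V,U\rangle$ and $\langle R(X,Y)V,U\rangle$ drop out; the crude six-term estimate then yields only $1$ and is not sharp, so I would simply cite the finer computation of \cite{abe12} for the value $\tfrac14$. The genuine content of the argument lies in the operator-norm computations for $C_2$: verifying that they match the tabulated constants of \cite{wan69} and that no configuration larger than the double rotation (real) or the single holomorphic direction (complex) has been overlooked.
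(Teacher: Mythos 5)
Your proposal is correct and is precisely the argument the paper intends: the paper gives no proof of Proposition \ref{secub-r-c} beyond the remark that it follows ``by similar way of Proposition \ref{secub}'', and your master estimate $\tfrac14 C_2^2+C_1^2+2C_1C_2$ is exactly what that proof yields (it reproduces $\tfrac{3+4\sqrt2}{2}$ for $C_1=1,C_2=\sqrt2$ and $\tfrac{13}{4}$ for $C_1=C_2=1$). Your case analysis of $C_2$ --- equality $C_1=C_2$ for $\mathfrak{su}(n,1)$ via the long restricted root $2\alpha$, the double-rotation enhancement $C_2=\sqrt2\,C_1$ in $\mathfrak{so}(n)$ only for $n\ge4$, and deferring $n=2$ to \cite{abe12} --- correctly supplies the details the paper omits.
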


In light of the above proposition, we can  slight improve the main result Theorem 0.1 in \cite{abe12,abe14} as  followings.

\begin{thm}\label{imabe0812}
\begin{itemize}
 \item[(1)] The volume of a real hyperbolic $n$ -orbifold is bounded below by $\mathcal{R}(n)$,  an  explicit constant depending only on dimension, given by
$$ \mathcal{R}(n)=\frac{2^{\frac{6-n}{4}}\pi^{\frac{n}{4}}(n-2)!(n-4)!\cdots 1}{(3+4\sqrt{2})^{\frac{n^2+n}{4}}
\Gamma(\frac{n^2+n}{4})} \int^{0.2372}_0(\sin\rho)^{\frac{n^2+n-2}{2}} d\rho.$$
 \item[(2)] The volume of a complex  hyperbolic $ n$ -orbifold is bounded below by $\mathcal{C}(n)$,  an  explicit constant depending only on dimension, given by
$$ \mathcal{C}(n)=\frac{2^{n^2+n+1}\pi^{\frac{n}{2}}(n-1)!(n-2)!\cdots!3!2!1!}{(13)^{\frac{n^2+2n}{2}}
\Gamma(\frac{n^2+2n}{2})} \int^{0.2497}_0(\sin\rho)^{n^2+2n-1} d\rho.$$
\end{itemize}
\end{thm}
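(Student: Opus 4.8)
The plan is to run, essentially verbatim, the argument used for the quaternionic case in the proof of Theorem~\ref{mainthm} (equivalently, the original arguments of Adeboye and Wei \cite{abe12,abe14}), the \emph{only} substantive change being that the dimension-dependent sectional-curvature bound used there is replaced by the sharper constant bound of Proposition~\ref{secub-r-c}. For part~(1) I take $G={\rm SO_o}(n,1)$ and for part~(2) $G={\rm SU}(n,1)$, in each case with maximal compact subgroup $K$ and scaled metric $\widetilde g$ as in \cite{abe12,abe14}; write $d=\dim G$, so that $d=\frac{n^2+n}{2}$ in the real case and $d=n^2+2n$ in the complex case. Just as in Proposition~\ref{prokb1}, the appropriate normalization of $\widetilde g$ realizes the quotient map $G\to{\bf H}_\bF^n$ as a Riemannian submersion onto the model space normalized to have (holomorphic) sectional curvature $-1$.

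First I would invoke Wang's Lemma~\ref{wanresult} to inscribe in $G/\Gamma$ a $\rho$-ball of radius $r_0=R_G/2$. The decisive observation is that $R_G$ --- the least positive zero of Wang's function, determined solely by the constants $C_1,C_2$ of the scaled metric --- is left \emph{unchanged} by the curvature reestimate, so that $r_0$ retains the values $0.0806$ (real) and $0.06925$ (complex) from \cite{abe12,abe14}. Applying Gunther's comparison theorem (Lemma~\ref{gunresult}) with the improved upper curvature bound yields a lower bound ${\rm Vol}[G/\Gamma]\ge V(d,b,r_0)$, and the totally geodesic compact fibers $K$ of the induced submersion $G/\Gamma\to{\bf H}_\bF^n/\Gamma$ (Lemmas~\ref{togeo1} and~\ref{partvol}) then give
$${\rm Vol}[{\bf H}_\bF^n/\Gamma]\ \ge\ \frac{V(d,b,r_0)}{{\rm Vol}[K]}.$$

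Finally I would substitute the explicit formula (\ref{vdkr}) together with the classical values of ${\rm Vol}[K]$ taken from \cite{gilmore} (as in Lemma~\ref{volsp}). Because every ingredient except the curvature constant coincides with the corresponding Adeboye--Wei computation, the whole effect of Proposition~\ref{secub-r-c} is to change the constant $b$ in (\ref{vdkr}). Exactly as in \cite{abe12,abe14}, once one passes to the metric normalizing the hyperbolic curvature to $-1$ this constant equals the fixed factor $2$ (real), resp.\ $4$ (complex), times the bound of Proposition~\ref{secub-r-c}; hence $2+9n$ is superseded by $2\cdot\frac{3+4\sqrt2}{2}=3+4\sqrt2$ and $36n+21$ by $4\cdot\frac{13}{4}=13$, while the integral limits $r_0\sqrt b$ become $0.0806\sqrt{3+4\sqrt2}=0.2372$ and $0.06925\sqrt{13}=0.2497$. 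Collecting terms reproduces $\mathcal R(n)$ and $\mathcal C(n)$ verbatim. In the real case the bound $\frac{3+4\sqrt2}{2}$ is the relevant one only for $n\ge4$; the cases $n=2,3$ are covered, and in fact sharpened, by the smaller bounds $\frac14$ and $\frac{13}{4}$ supplied by Proposition~\ref{secub-r-c}.

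The one step demanding care is this bookkeeping of the normalization: one must verify that the bound of Proposition~\ref{secub-r-c}, stated with respect to $\widetilde g$, enters (\ref{vdkr}) through precisely the same fixed scaling factor as in \cite{abe12,abe14} --- a factor untouched by the reestimate --- and that $r_0\sqrt b<\pi$, so that the minimum in (\ref{vdkr}) is attained at $r_0\sqrt b$ and not at $\pi$. Both verifications are routine and run exactly parallel to the quaternionic computation of Section~\ref{volume}; granting Proposition~\ref{secub-r-c}, no genuine obstacle remains, the entire content of the improvement residing in that curvature reestimate.
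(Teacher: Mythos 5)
Your proposal is correct and is exactly the route the paper intends: the paper offers no separate argument for Theorem~\ref{imabe0812} beyond the remark that it follows from Proposition~\ref{secub-r-c} by rerunning the Adeboye--Wei machinery, and you carry out precisely that substitution, correctly noting that Wang's radius $R_G$ (hence $r_0$) is unaffected, that only the constant $b$ in (\ref{vdkr}) changes, and that the new integral limits $0.0806\sqrt{3+4\sqrt{2}}\approx 0.2372$ and $0.06925\sqrt{13}\approx 0.2497$ lie below $\pi$. Your bookkeeping of the normalization factors ($2$ in the real case, $4$ in the complex case) is consistent with the stated formulas, and your remark that the uniform bound $\frac{3+4\sqrt{2}}{2}$ is only the operative one for $n\geq 4$ in the real case matches the paper's tabulated improvements.
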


\vspace{2mm}
The following is a table of the lower bound for the volume of hyperbolic $n$-orbifolds in \cite{abe12,abe14} and this paper for some cases of $n\leq 4$ (by software  Matlab of version R2009b).
$$\begin{tabular}{c|cc|ccc}
               \hline
                            &\multicolumn{2}{c}\   Results in [2,3]\quad \quad \quad \ \  \vline &\multicolumn{3}{r} \ Results of this paper\quad \quad \quad\quad \quad \quad \ \ \  \\
              \hline
               $n$ & $\mathcal{R}(n)$  & $\mathcal{C}(n)$  & $\mathcal{R}(n)$& $\mathcal{C}(n)$  & $\mathcal{Q}(n)$\\
               \hline
               $1$ &  & $0.00168$ &   & 0.00175  &  $3.6221\times 10^{-11}$ \\
                \hline
             $2$  &$0.00125$ & $2.9180\times 10^{-9}$ &  & $4.1822\times 10^{-9}$ &  $5.3637\times 10^{-25}$ \\
                \hline
               $3$ & $2.4583\times 10^{-7}$ & $3.6324\times 10^{-18}$ &  $2.8073\times 10^{-7}$ & $1.1556\times 10^{-17}$ &  \\
                  \hline
                $4$ & $3.1469\times 10^{-13}$  & $2.2347\times 10^{-30}$ & $4.0019\times 10^{-13}$ &  $3.7865\times 10^{-29}$  & \\
               \hline
             \end{tabular}
$$

\vspace{3mm}
{\bf Acknowledgements}\quad   We would like to thank John R. Parker and Ilesanmi Adeboye for reading this paper carefully and useful suggestions.   This work was  supported by NSF of Guangdong Province (2015A030313644)  and  State  Scholarship  Council of China,  and was completed when the first author was an Academic Visitor to Durham University. He would like to thank  Department of Mathematical Sciences  for its hospitality.


\begin{thebibliography}{99}
\bibitem{abe08} I.  Adeboye,  Lower bounds for the volume of hyperbolic n-orbifolds,  Pacific  J. Math.  {\bf 79} (2008), 1-19.

\bibitem{abe12}I. Adeboye and G. F.  Wei,  On volumes of hyperbolic hyperbolic orbifolds, Algebr. Geom. Topol.  {\bf 12} (2012), 215-233.

\bibitem{abe14} I. Adeboye and G. F.  Wei, On volumes of complex hyperbolic orbifolds,  Mich. Math. J.  {\bf 63} (2014), 355-369.


\bibitem{capa} W. S. Cao and J. R. Parker,  J{\o}rgensen's inequality and collars in $n$-dimensional quaternionic hyperbolic space,  Quart. J. Math.  {\bf 62} (2011), 523--543.

\bibitem{cao16} W. S. Cao,  Balls in quaternionic hyperbolic manifolds, Kodai Math. J.  {\bf 39}(2016), 439-454.


\bibitem{chen}S. S. Chen and L. Greenberg,  Hyperbolic spaces,   Contributions to analysis,  a collection of papers dedicated to Lipman Bers,
Academic Press, New York, 1974, 49-87.


\bibitem{frhe93} S. Friedland and S. Hersonsky. J{\o}gensen's inequality for discrete groups in normed algebras,
Duke Math. J.  {\bf 69}(1993), 593-614.

\bibitem{fu11} X. Fu, L. Li, and X. Wang,  A lower bound for the volumes of complex hyperbolic orbifolds, Geom. Dedicata   {\bf 155} (2011), 21-30.
\bibitem{gal90}  S. Gallot, D. Hulin and J. Lafontaine,  Riemannian geometry,  2nd ed,  Springer-Verlag, 1990.

\bibitem{gema} F. Gehring and G. J.  Martin,  Minimal co-volume hyperbolic lattices, I: The spherical points of a Kleinian
group, Ann. of Math.  {\bf 170} (2009), 123-161.

\bibitem{gilmore} R. Gilmore,   Lie groups, Lie algebras, and some of their applications. Wiley, 1974.


\bibitem{haxi16} M. H.  Han and B. H.  Xie, Lower bound for the volumes of quaternionic hyperbolic orbifolds, Complex Var. Elliptic Equ. {\bf 62} (2017), 47-56.

\bibitem{kama68} D. A. Kazdan and G. A. Margulis,  A proof of Selberg's hypothesis, Sb. Math.  {\bf 75} (1968),
162-168.

\bibitem{kimp}  I. Kim  and  J. R. Parker, Geometry of quaternionic hyperbolic
manifolds,   Math. Proc. Camb. Phil. Soc. {\bf 135} (2013), 291-320.

\bibitem{mar89} G. J. Martin, Balls in hyperbolic manifolds, J. Lond. Math. Soc. {\bf 40} (1989), 257-264.
\bibitem{most}G. D. Mostow, Strong rigidity of locally symmetric space, Princeton University Press, 1973.

\bibitem{Peter} P. Petersen, Riemannian geometry,  2nd ed, Springer-Verlag, 2006.

\bibitem{sie43}C. L. Siegel, Discontinuous groups, Ann. of Math. {\bf 44} (1943), 674-689.
\bibitem{sie45}C. L. Siegel,  Some remarks on discontinuous groups, Ann. of Math. {\bf 46} (1945), 708-718.

\bibitem{wan69}  H. C. Wang, Discrete nilpotent subgroups of Lie groups,  J. Differ. Geom.  {\bf 3} (1969), 481-492.

\bibitem{wan72}  H. C. Wang,  Topics in totally discontinuous groups,  in Symmetric spaces, Boothby-Weiss, New York, 1972, 460-485.

\bibitem{xie14}B. H. Xie, J. Y. Wang  and Y. P. Jiang,   Balls in complex hyperbolic manifolds, Sci. China Math. {\bf 57} (2014), 767-774.

\end{thebibliography}
\end{document}